\newtheorem{theorem}{Theorem}[section]
\theoremstyle{definition}
\newtheorem{corollary}[theorem]{Corollary}
\newtheorem{definition}[theorem]{Definition}
\newtheorem{example}[theorem]{Example}
\newtheorem{lemma}[theorem]{Lemma}
\newtheorem{problem}[theorem]{Problem}
\newtheorem{proposition}[theorem]{Proposition}
\newtheorem{remark}[theorem]{Remark}
\title[Learning from Signatures]{Rough paths, Signatures and the modelling
of functions on streams}
\author[Terry Lyons]{Terry Lyons\thanks{%
Acknowledges the support of the Oxford-Man Institute, the support provided
by ERC advanced grant ESig (agreement no. 291244), and particularly the
contributions of his colleagues and his students without whom none of this
would have happened and in addition to Kelly Wyatt, Justin Sharp, Horatio Boedihardjo,  Hao Ni and
Danyu Yang for helping the author finalise this mss. The data analysis is is
reproduced from the cited paper with Gyurko et al., Gyurko did the analysis,
The raw data for that paper is available on Reuters.}}
\begin{document}

\maketitle

\begin{abstract}
Rough path theory is focused on capturing and making precise the
interactions between highly oscillatory and non-linear systems. The
techniques draw particularly on the analysis of LC Young and the geometric
algebra of KT Chen. The concepts and theorems, and the uniform estimates,
have found widespread application; the first applications gave simplified
proofs of basic questions from the large deviation theory and substantially
extending Ito's theory of SDEs; the recent applications contribute to
(Graham) automated recognition of Chinese handwriting and (Hairer)
formulation of appropriate SPDEs to model randomly evolving interfaces. At
the heart of the mathematics is the challenge of describing a smooth but
potentially highly oscillatory and vector valued path $x_{t}$ parsimoniously
so as to effectively predict the response of a nonlinear system such as $%
dy_{t}=f(y_{t})dx_{t}$, $y_{0}=a$. The Signature is a homomorphism from the
monoid of paths into the grouplike elements of a closed tensor algebra. It
provides a graduated summary of the path $x$. Hambly and Lyons have shown
that this non-commutative transform is faithful for paths of bounded
variation up to appropriate null modifications. Among paths of bounded
variation with given Signature there is always a unique shortest
representative. These graduated summaries or features of a path are at the
heart of the definition of a rough path; locally they remove the need to
look at the fine structure of the path. Taylor's theorem explains how any
smooth function can, locally, be expressed as a linear combination of
certain special functions (monomials based at that point). Coordinate
iterated integrals form a more subtle algebra of features that can describe
a stream or path in an analogous way; they allow a definition of rough path
and a natural linear "basis"\ for functions on streams that can be used for
machine learning.
\end{abstract}

\begin{classification}
Primary 00A05; Secondary 00B10.
\end{classification}

\begin{keywords}
Rough paths, Regularity Structures, Machine Learning, Functional Regression, Numerical Approximation of Parabolic PDE, Shuffle Product, Tensor Algebra
\end{keywords}

\setcounter{MaxMatrixCols}{10}

\setcounter{tocdepth}{1} 
\tableofcontents
\pagebreak

\section{A path or a text?}

The mathematical concept of a path embraces the notion of an evolving or
time ordered sequence of events, parameterised by a continuous variable. Our
mathematical study of these objects does not encourage us to think broadly
about the truly enormous range of "paths" that occur. This talk will take an
analyst's perspective, we do not expect to study a particular path but
rather to find broad brush tools that allow us to study a wide variety of
paths - ranging form very "pure" mathematical objects that capture holonomy
to very concrete paths that describe financial data. Our goal will be to
explain the progress we have made in the last 50 years or so in describing
such paths effectively, and some of the consequences of these developments.

Let us start by noting that although most mathematicians would agree on a
definition of a path, most have a rather stereotyped and limited imagination
about the variety of paths that are "in the wild". One key observation is
that in most cases we are interested in paths because they represent some
evolution that interacts with and influences some wider system. Another is
that in most paths, in standard presentations, the content and influence are
locked into complex multidimensional oscillations.

\begin{figure}[H]
\centering
\includegraphics[
trim = 0mm 0mm 0mm 0mm, clip, width=0.80\textwidth]
{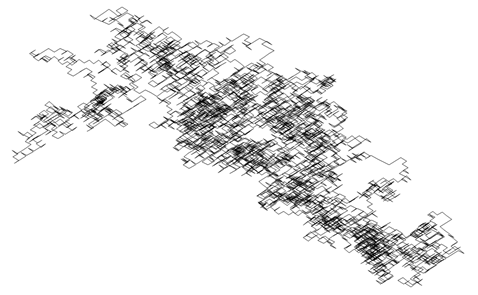}
\end{figure}

The path in the figure is a piece of text. Each character in the text is encoded using ascii as a byte of 8 bits,
each byte is represented as four letters of two bits, each two bit letter is represented by a line from the centre to one of the four corners of a square (for visial reasons the centre of this square is dispaced slightly to create a loop). The text can easily be represented in other ways, perhaps in different font or with each character as a bitmap.
Each stream has broadly the same effect on a coarse scale
although the detailed texture is perhaps a bit different.

\section{Financial Data or a Semimartingale}

One important source of sequential data comes from financial markets. An
intrinsic feature of financial markets is that they are high dimensional but
there is a strong notion of sequencing of events. Buying with future
knowledge is forbidden. Much of the information relates to prices, and one
of the radical successes of applied mathematics over the last 20-30 years
came out of the approximation of price processes by simple stochastic
differential equations and semimartingales and the use of It\^{o}'s
calculus. However, modern markets are not represented by simple price
processes. Most orders happen on exchanges, where there are numerous bids,
offers, and less commonly, trades. Much activity in markets is concerned
with market making and the provision of liquidity; decisions to post to the
market are based closely on expectation of patterns of behaviour, and most
decisions are somewhat distant from any view about fundamental value. If one
is interested in alerting the trader who has a bug in his code, or
understanding how to trade a large order without excessive charges then the
semi-martingale model has a misplaced focus. 
\begin{figure}[H]
\centering
\includegraphics[
trim = 15mm 22mm 0mm 15mm, clip=true, width=1.07\textwidth]
{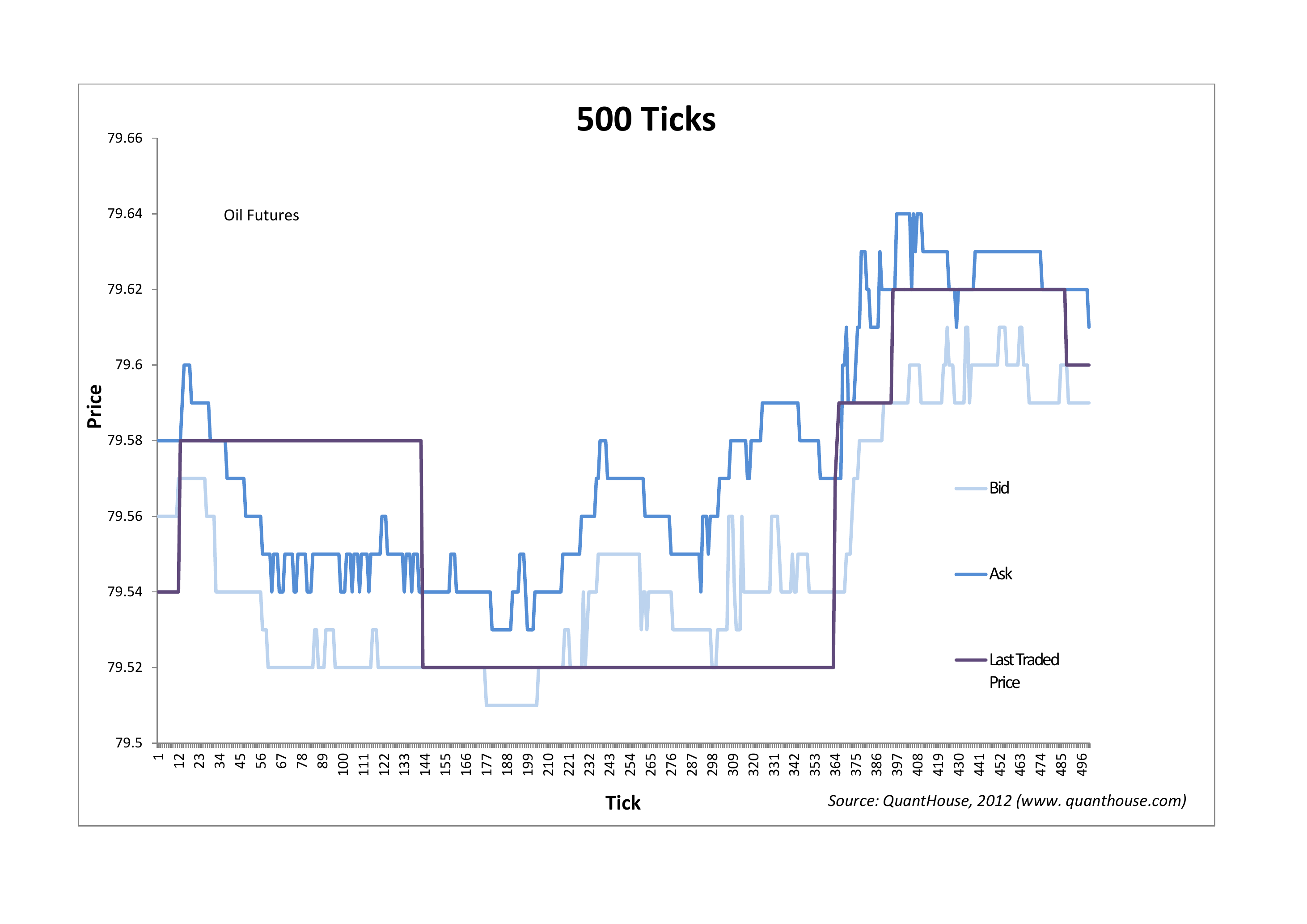}
\caption{A snapshot of level one order book data}
\label{fig:500Ticks}
\end{figure}
The data in the figure \ref{fig:500Ticks} is a snapshot of the level one order book showing activity on a market for oil futures over 500 changes (roughly a
15 minute period). One can see the bid and offer prices changing, although trades happen (and so the last executed price changes) much less frequently.
It is questionable whether a semi-martingale model for prices can capture this rich structure effectively.

\section{Paths - Simply Everywhere - Evolving systems}

Informally, a stream is a map $\gamma $ from a totally ordered set $I$ to
some state space, where we are interested in the effect (or transformation
of state) this stream achieves. As we have noted the same stream of
information can admit different representations with different fidelity.
When the totally ordered set $I$ is an interval and there are reasonable
path properties (e.g. such as right continuity) we will call the stream a
path. Nonetheless, many interesting streams are finite and discrete. There
are canonical and informative ways to convert them \cite%
{flint2013convergence} to continuous paths.

It is worth noting that, even at this abstract level, there are natural
mathematical operations and invariances that are applied to a stream. One
can reparameterise the speed at which one examines the stream and
simultaneously the speed at which one looks at the effects. One can split a
stream into two or more segments (a coproduct). One can sub-sample a stream.
In general we will focus on those streams which are presented in a way where
such sub-sampling degrades the information in the stream gradually. One can
also merge or interleave discrete streams according to their time stamps if
the totally ordered sets $I$, $I^{\prime }$can be interleaved. All of these
properties are inherited for the properties of totally ordered sets. If the
target "effect" or state space is linear there is also the opportunity to
translate and so concatenate streams or paths \cite{hambly2010uniqueness}
and so get richer algebraic structures. One of the most interesting and
economically important questions one can ask about a stream is how to
summarise (throw away irrelevant information) so as to succinctly capture
its effects. We give a few examples in Table \ref{tab:summarystream}.
\begin{table}[H]
\centering
\begin{tabular}{|c|c|c|}
\hline
text & schoolchild & precis \\ \hline
sound & audio engineer & faithful perception \\ \hline
web page & search provider & interest for reader \\ \hline
web click history & advertiser & effective ad placement \\ \hline
Brownian path & numerical analysis & effective simulation \\ \hline
rough paths & analyst & RDEs \\ \hline
\end{tabular}%
\caption{Examples of contexts where streams are summarised while retaining their essence.}
\label{tab:summarystream}
\end{table}
What is actually quite surprising is that there is a certain amount of
useful work one can do on this problem that does not depend on the nature of
the stream or path.

\section{A simple model for an interacting system}

We now focus on a very specific framework where the streams are maps from a
real interval, that we will intuitively refer to as the time domain, into an
a Banach space that we will refer to as the state space. We will work with
continuous paths in continuous time but, as we mentioned, there are
canonical ways to embed discrete tick style data into this framework using
the Hoff process and in financial contexts this is important. There is also
a more general theory dealing with paths with jumps [Williams, Simon].

\subsection{Controlled Differential Equations}

A path is a map $\gamma $ from an interval $J=\left[ J_{-},J_{+}\right] $
into a Banach space $E$. The dimension of $E$ may well be finite, but we
allow for the possibility that it is not. It has bounded ($p$-)variation if%
\begin{eqnarray*}
\sup_{\ldots u_{i}<u_{i+1}\ldots \in \left[ J_{-},J_{+}\right]
}\sum_{i}\left\Vert \gamma _{u_{i+1}}-\gamma _{u}\right\Vert &<&\infty \\
\sup_{\ldots u_{i}<u_{i+1}\ldots \in \left[ J_{-},J_{+}\right]
}\sum_{i}\left\Vert \gamma _{u_{i+1}}-\gamma _{u}\right\Vert ^{p} &<&\infty
\end{eqnarray*}%
where $p\geq 1$ In our context the path $\gamma $ is controlling the system,
and we are interested in its effect as measured by $y$ and the interactions
between $\gamma $ and $y$. It would be possible to use the theory of rough
paths to deal with the internal interactions of autonomous and "rough"
systems, one specific example of deterministic McKean Vlasov type is \cite%
{cass2013evolving}.

Separately there needs to be a space $F$ that carries the state of the
system and a family of different ways to evolve. We represent the dynamics
on $F$ through the space $\Omega \left( F\right) $ of vector fields on $F.$
Each vector field provides a different way for the state to evolve. We
connect this potential to evolve the state in $F$ to the control $\gamma $
via a linear map%
\[
V:E\overset{linear}{\rightarrow }\Omega \left( F\right) \text{.}
\]%
Immediately we can see the controlled differential equation%
\begin{eqnarray*}
dy_{t} &=&V\left( y_{t}\right) d\gamma _{t},\ y_{J_{-}}=a \\
\pi _{J}\left( y_{J_{-}}\right) &:&=y_{J_{+}}
\end{eqnarray*}%
provides a precise framework allowing for the system $y$ to respond to $%
\gamma $ according to the dynamics $V$. We call such a system a controlled
differential equation.

The model of a controlled differential equation is a good one. Many
different types of object can be positioned to fit the definition. Apart
from the more obvious applied examples, one can view a finite automata (in
computer science sense) and the geometric concept of lifting a path along a
connection as producing examples.

There are certain apparently trivial properties that controlled differential equations and
the paths that control them have; none the less they are structurally essential so we mention them now.

\begin{lemma}[Reparameterisation]
If $\tau :I\rightarrow J$ is an increasing homeomorphism, and if 
\[
dy_{t}=V\left( y_{t}\right) d\gamma _{t},\ y_{J_{-}}=a,
\]%
then the reparameterised control produces the reparameterised effect: 
\[
dy_{\tau \left( t\right) }=V\left( y_{\tau \left( t\right) }\right) d\gamma
_{\tau \left( t\right) },\ y_{\tau \left( I_{-}\right) }=a.
\]
\end{lemma}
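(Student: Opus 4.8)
The plan is to show that reparameterisation of a controlled differential equation is essentially a change of variables in the driving integral. The underlying reason is that the solution to a CDE is defined through integrals against $\gamma$, and these integrals are invariant under simultaneous increasing reparameterisation of the integrand path and the integrator. Let me think about the cleanest way to organize this.

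First I would recall that the solution $y$ to $dy_t = V(y_t)\,d\gamma_t$ with $y_{J_-}=a$ is characterised by the integral equation
\[
y_t = a + \int_{J_-}^{t} V(y_s)\,d\gamma_s, \qquad t\in J.
\]
The whole statement reduces to checking that the composition $z := y\circ\tau$, defined on $I$, satisfies the analogous integral equation driven by $\gamma\circ\tau$ with the correct initial condition.

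Next I would verify the initial condition and then apply the change-of-variables formula for the driving integral. Since $\tau$ is an increasing homeomorphism, $z_{I_-} = y_{\tau(I_-)} = y_{J_-} = a$, which matches the claimed initial data. For the integral term, I would substitute $s = \tau(r)$ in the Young (or bounded-variation) integral: the key identity is
\[
\int_{J_-}^{\tau(t)} V(y_s)\,d\gamma_s = \int_{I_-}^{t} V\bigl(y_{\tau(r)}\bigr)\,d(\gamma\circ\tau)_r,
\]
which holds because the Riemann–Stieltjes sums defining the two integrals are identical once one pushes every partition of $[J_-,\tau(t)]$ back through $\tau^{-1}$ to a partition of $[I_-,t]$; the monotonicity and bijectivity of $\tau$ guarantee a bijection between partitions that preserves the tags and the increments $\gamma_{s_{i+1}}-\gamma_{s_i}$. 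Combining these two observations gives
\[
z_t = a + \int_{I_-}^{t} V(z_r)\,d(\gamma\circ\tau)_r,
\]
so $z = y\circ\tau$ solves the reparameterised equation, which is exactly the assertion.

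The main obstacle is justifying the change-of-variables step at whatever level of regularity the paths actually possess, rather than treating it as a smooth substitution. For $\gamma$ of bounded variation the Riemann–Stieltjes argument above is routine, since reparameterisation by a homeomorphism neither creates nor destroys variation and the integral is genuinely a limit of tagged sums. The subtlety is that $\tau$ is only assumed to be an increasing \emph{homeomorphism}, not a diffeomorphism, so one cannot write $d\gamma_{\tau(r)} = \gamma'(\tau(r))\tau'(r)\,dr$ and must argue directly at the level of partitions and increments. I would therefore emphasise that invariance of the iterated (Young) integral under reparameterisation is the true content here, and that the lemma is really the statement that the solution map of a CDE factors through the reparameterisation-invariant data of $\gamma$ — foreshadowing the later role of the Signature, which is by construction invariant under exactly such reparameterisations.
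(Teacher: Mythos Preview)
The paper does not actually prove this lemma: it is introduced as one of the ``apparently trivial properties'' of controlled differential equations and is stated without any argument. Your proposal is correct and supplies exactly the routine justification one would expect --- recasting the CDE as an integral equation and then observing that the Riemann--Stieltjes (or Young) integral is invariant under pushing partitions through the increasing homeomorphism $\tau$. Your remark that only a homeomorphism, not a diffeomorphism, is assumed, so that one must argue at the level of increments rather than via $\tau'$, is the one point worth making explicit, and you handle it appropriately.
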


\begin{lemma}[Splitting]
Let $\pi _{J}$ be the diffeomorphism capturing the transformational effect
of $\gamma |_{J}.$ Let $t\in J$. Then $\pi _{J}$ can be recovered by
composing the diffeomorpisms $\pi _{\left[ J_{-},t\right] }$, $\pi _{\left[
t,J_{+}\right] }$ associated with splitting the interval J at $t$ and
considering the composing the effect of $\gamma |_{_{\left[ J_{-},t\right]
}} $ and $\gamma |_{\left[ t,J_{+}\right] }$ separately:%
\[
\pi _{\left[ t,J_{+}\right] }\pi _{\left[ J_{-},t\right] }=\pi _{J}.
\]
\end{lemma}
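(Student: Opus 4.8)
The plan is to read the claim off directly from the defining property of $\pi_J$ as the time-$J_+$ solution map of the controlled differential equation, using only uniqueness of solutions together with additivity of the driving integral over concatenated intervals. Fix an initial state $a$ and let $y$ be the unique solution of $dy_s=V(y_s)\,d\gamma_s$ on all of $J$ with $y_{J_-}=a$, so that by definition $\pi_J(a)=y_{J_+}$. I would first rewrite the equation in integral form,
\[
y_v = a + \int_{J_-}^{v} V(y_s)\,d\gamma_s, \qquad v\in J,
\]
with whichever integral (Young or rough) matches the regularity of $\gamma$, since this is the form in which splitting at $t$ is transparent.

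First I would restrict to $[J_-,t]$: the restriction $y|_{[J_-,t]}$ is driven by $\gamma|_{[J_-,t]}$ and satisfies the same integral equation on $[J_-,t]$ with the same initial value $a$, so by uniqueness it \emph{is} the solution defining $\pi_{[J_-,t]}$, giving $\pi_{[J_-,t]}(a)=y_t$. Next I would do the same on $[t,J_+]$; splitting the integral as
\[
y_v = y_t + \int_{t}^{v} V(y_s)\,d\gamma_s, \qquad v\in[t,J_+],
\]
exhibits $y|_{[t,J_+]}$ as the solution driven by $\gamma|_{[t,J_+]}$ with \emph{initial} value $y_t$, so again by uniqueness $\pi_{[t,J_+]}(y_t)=y_{J_+}$. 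Composing the two steps yields $\pi_{[t,J_+]}\bigl(\pi_{[J_-,t]}(a)\bigr)=\pi_{[t,J_+]}(y_t)=y_{J_+}=\pi_J(a)$, and since $a$ was arbitrary this is exactly $\pi_{[t,J_+]}\,\pi_{[J_-,t]}=\pi_J$.

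The only genuine content, and the step I expect to be the main obstacle, is justifying that the restriction of the global solution really \emph{is} the solution of the restricted problem. This rests on two facts: the additivity $\int_{J_-}^{J_+}=\int_{J_-}^{t}+\int_{t}^{J_+}$ of the driving integral over the concatenated intervals, and uniqueness of solutions to the controlled differential equation. The latter is already implicit in the hypothesis that each $\pi$ is a well-defined diffeomorphism, which forces the vector fields $V$ to be regular enough (Lipschitz in the bounded-variation case, and of sufficient Stein regularity relative to the $p$-variation of $\gamma$ in the rough case) for existence and uniqueness to hold. In the stated setting the argument is therefore structural rather than analytic, and the composition is automatically a diffeomorphism since each one-sided flow is one, consistent with the asserted conclusion.
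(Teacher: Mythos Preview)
Your argument is correct and is exactly the natural one: restrict the global solution, use additivity of the driving integral to identify each restriction as the solution of the corresponding sub-interval problem, and invoke uniqueness to conclude. The paper itself does not give a proof; it presents this lemma as one of the ``apparently trivial'' structural properties of controlled differential equations and states it without argument, so your write-up simply makes explicit what the paper takes for granted.
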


In this way we see that, assuming the vector fields were smooth enough to
solve the differential equations uniquely and for all time, a controlled
differential equation is a homomorphism from the monoid of paths with
concatenation into the diffeomorphisms/transformations of the state space.
By letting $\pi $ act as an operator on functions we see that every choice
of $V$ defines a representation of the monoid of paths in $E$

\begin{remark}[Subsampling]
Although there is a good behaviour with respect to sub-sampling, which in
effect captures and quantifies the numerical analysis of these equations, it
is more subtle and we do not make it explicit here.
\end{remark}

\begin{remark}
Fixing $V$, restricting $\gamma $ to smooth paths on $\left[ 0,1\right] $
and considering the solutions $y$ with $y_{0}=a,$ generically the closure of
the set of pairs $\left( \gamma ,y\right) $ in the uniform topology is NOT
the graph of a map; $\gamma \rightarrow y$ is not closable and so is not
well defined as a (even an unbounded and discontinuous) function in the
space of continuous paths. Different approximations lead to different views
as to what the solution should be.
\end{remark}

\subsection{Linear Controlled Differential Equations}

Where the control $\gamma $ is fixed and smooth, the state space is linear,
and all the vector fields are linear, then the space of responses $y$, as
one varies the starting location $a$, is a linear space and $\pi _{\left[ S,T%
\right] }:a=y_{S}\rightarrow y_{T}$ is a linear automorphism. This case is
essentially Cartan's development of a path in a Lie Algebra into a path in
the Lie Group starting at the identity. From our point of view it is a very
important special case of our controlled differential equations; it reveals
one of the key objects we want to discuss in this paper.

Suppose $F$ is a Banach space, and $A$ is a linear map $E\rightarrow Hom_{%
\mathbb{R}}\left( F,F\right) $ and that $\gamma _{t}$ is a path in $E$.
Consider the linear differential equation%
\[
dy_{t}=Ay_{t}d\gamma _{t}.
\]%
By iterating using Picard iteration one obtains%
\[
y_{J_{+}}=\left( \sum_{n=0}^{\infty }A^{n}\underset{J_{-}\leq u_{1}\leq
\ldots \leq u_{n}\leq J_{+}}{\idotsint }d\gamma _{u_{1}}\otimes \ldots
\otimes d\gamma _{u_{n}}\right) y_{0}
\]

The Signature of $\gamma$ over the interval $J=\left[J_{-},J_{+}\right]$

\begin{definition}
The Signature $S$ of a bounded variation path (or more generally a weakly
geometric $p$-rough path) $\gamma $ over the interval $J=\left[ J_{-},J_{+}%
\right] $ is the tensor sequence%
\[
S\left( \gamma |_{J}\right) :=\sum_{n=0}^{\infty }\underset{u_{1}\leq \ldots
\leq u_{n}\in J^{n}}{\idotsint }d\gamma _{u_{1}}\otimes \ldots \otimes
d\gamma _{u_{n}}\in \bigoplus_{n=0}^{\infty }E^{\otimes n}
\]%
It is sometimes written $S\left( \gamma \right) _{J}$ or $S\left( \gamma
\right) _{J_{-},J_{+}}.$
\end{definition}

\begin{lemma}
The path $t\rightarrow S\left( \gamma \right) _{0,t}$ solves a linear
differential equation controlled by $\gamma $.
\end{lemma}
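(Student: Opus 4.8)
The plan is to exhibit $Y_t := S(\gamma)_{0,t}$ as the solution of the linear controlled differential equation $dY_t = Y_t \otimes d\gamma_t$, $Y_0 = \mathbf{1}$, in the completed tensor algebra $F = \bigoplus_{n=0}^{\infty} E^{\otimes n}$, where $\mathbf{1}$ denotes the unit in the degree-zero component. To match the framework of the preceding subsection I would take the linear map $A \colon E \to \Hom(F,F)$ defined by $A(e)\,w := w \otimes e$; bilinearity of the tensor product makes $A$ linear in $e$, so that $dY_t = A(d\gamma_t)\,Y_t = Y_t \otimes d\gamma_t$ is precisely a linear controlled differential equation of the form $dy_t = A\,y_t\,d\gamma_t$ considered above.

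The verification I would carry out is level by level. Writing $Y_t = (Y_t^{(n)})_{n \geq 0}$ with $Y_t^{(n)} = \idotsint_{0 \leq u_1 \leq \cdots \leq u_n \leq t} d\gamma_{u_1} \otimes \cdots \otimes d\gamma_{u_n}$, the degree-zero term is the constant $1$, while for $n \geq 1$ the fundamental theorem of calculus for iterated integrals identifies the derivative in the upper endpoint: differentiating in $t$ pins the innermost variable to $u_n = t$ and leaves the $(n-1)$-fold integral, so that $dY_t^{(n)} = Y_t^{(n-1)} \otimes d\gamma_t$. Summing these identities over all levels yields exactly $dY_t = Y_t \otimes d\gamma_t$, and $Y_0 = \mathbf{1}$ is immediate. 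As a consistency check, iterating this equation from $Y_0 = \mathbf{1}$ by Picard reproduces term by term the series defining the Signature, since $A(d\gamma_{u_n})\cdots A(d\gamma_{u_1})\,\mathbf{1} = d\gamma_{u_1} \otimes \cdots \otimes d\gamma_{u_n}$ recovers the $n$-th level.

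The bounded-variation case is the routine one: each $Y^{(n)}$ is then an ordinary Young/Riemann--Stieltjes integral for which the differentiation above is classical, and one only needs the standard factorial-decay estimates on the tensor norms $\|Y^{(n)}\|$ to guarantee that the series lies in the completed tensor algebra, so that the level-wise identities sum to a genuine equation in $F$. The main obstacle is the weakly geometric $p$-rough path case, where $\int Y \otimes d\gamma$ is not an elementary object and the naive term-by-term differentiation is unavailable. There I would instead read the conclusion off the defining structure of the Signature: Chen's multiplicativity $S(\gamma)_{0,s} \otimes S(\gamma)_{s,t} = S(\gamma)_{0,t}$ together with the first-order increment $S(\gamma)_{s,t} = \mathbf{1} + (\gamma_t - \gamma_s) + o(|t-s|)$ says precisely that $Y$ is controlled by $\gamma$ with Gubinelli derivative $Y$ itself, which is the rough-path meaning of $dY = Y \otimes d\gamma$; alternatively one passes the bounded-variation identity to the rough limit via the continuity/extension theorem.
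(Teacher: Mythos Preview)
Your proposal is correct and takes essentially the same approach as the paper: both identify the equation $dS_{0,t}=S_{0,t}\otimes d\gamma_t$, $S_{0,0}=1$ as the relevant linear controlled differential equation. The paper's proof is in fact just a single line stating this equation (``the universal non-commutative exponential''), so your level-by-level verification, explicit identification of $A(e)w=w\otimes e$, and discussion of the rough case supply considerably more detail than the paper itself offers.
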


\begin{proof}
The equation is the universal non-commutative exponential: 
\begin{eqnarray*}
dS_{0,t} &=&S_{0,t}\otimes d\gamma _{t}. \\
S_{0,0} &=&1
\end{eqnarray*}
\end{proof}

The solution to any linear equation is easily expressed in terms of the
Signature%
\begin{eqnarray}
dy_{t} &=&Ay_{t}d\gamma _{t}  \nonumber \\
y_{_{J_{+}}} &=&\left( \sum_{0}^{\infty }A^{n}S_{J}^{n}\right) y_{_{J_{-}}}
\label{chensig2} \\
\pi _{J} &=&\sum_{0}^{\infty }A^{n}S_{J}^{n}  \nonumber
\end{eqnarray}%
and we will see in the next sections that this series converges very well
and even the first few terms in $S$ are effective in describing the response 
$y_{T}$ leading to the view that $\gamma |_{J}\rightarrow S\left( \gamma
|_{J}\right) $ is a transform with some value. The use of $S$ to describe
solutions to linear controlled differential equations goes back at least to
Chen, and Feynman. The \emph{magic} is that one can estimate the errors in
convergence of the series (\ref{chensig2}) without detailed understanding of 
$\gamma $ or $A.$

\section{Remarkable Estimates (for $p>1$)}

It seems strange, and even counter intuitive, that one should be able to
identify and abstract a finite sequence of features or coefficients
describing $\gamma $ adequately so that its effect on a broad range of
different systems could be accurately predicted without detailed knowledge
of the system $A$ or the path $\gamma $ - beyond those few coefficients. But
that is the truth of it, there are easy uniform estimates capturing the
convergence of the series (\ref{chensig2}) based entirely on the length (or
more generally $p$-rough path variation) of the control and the norm of $A$
as a map from $E$ to the linear vector fields on $F$.

\begin{lemma}
If $\gamma $ is a path of finite variation on $J\ $with length $\left\vert
\gamma _{J}\right\vert <\infty $, then%
\begin{eqnarray*}
S_{J}^{n} &:&=\underset{u_{1}\leq \ldots \leq u_{n}\in J^{n}}{\idotsint }%
d\gamma _{u_{1}}\otimes \ldots \otimes d\gamma _{u_{n}} \\
&\leq &\frac{\left\vert \gamma _{J}\right\vert ^{n}}{n!}
\end{eqnarray*}%
giving uniform error control%
\[
\left\Vert y_{J_{+}}-\sum_{0}^{N-1}A^{n}\underset{J_{-}\leq u_{1}\leq \ldots
\leq u_{n}\leq J_{+}}{\idotsint }d\gamma _{u_{1}}\otimes \ldots \otimes
d\gamma _{u_{n}}y_{0}\right\Vert \leq \left( \sum_{n=N}^{\infty }\frac{%
\left\Vert A\right\Vert ^{n}\left\vert \gamma _{J}\right\vert ^{n}}{n!}%
\right) \left\Vert y_{0}\right\Vert .
\]
\end{lemma}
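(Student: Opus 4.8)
The plan is to prove the statement in two stages: first establish the factorial decay bound $\Vert S_J^n\Vert \le \vert\gamma_J\vert^n/n!$ on the individual tensor levels, and then derive the uniform error estimate by substituting this bound into the closed-form solution (\ref{chensig2}).

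For the factorial bound I would first pass the norm inside the iterated integral using the triangle inequality for Banach-space-valued (Bochner) integrals, together with the fact that the chosen tensor norm is a cross-norm, so that $\Vert a\otimes b\Vert\le\Vert a\Vert\,\Vert b\Vert$. This yields
\[
\left\Vert \underset{u_1 \le \ldots \le u_n \in J^n}{\idotsint} d\gamma_{u_1} \otimes \ldots \otimes d\gamma_{u_n} \right\Vert \le \underset{u_1 \le \ldots \le u_n \in J^n}{\idotsint} \left\Vert d\gamma_{u_1}\right\Vert \cdots \left\Vert d\gamma_{u_n}\right\Vert .
\]
By the Reparameterisation Lemma the Signature is invariant under increasing reparameterisation, so I may assume $\gamma$ is parameterised by arc length on $[0,L]$ with $L=\vert\gamma_J\vert$; equivalently I set $s(u)$ equal to the length of $\gamma$ on $[J_-,u]$, so that $\Vert d\gamma_u\Vert$ becomes $ds$. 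The right-hand side is then exactly the volume of the ordered simplex $\{0\le s_1\le\ldots\le s_n\le L\}$, which equals $L^n/n!=\vert\gamma_J\vert^n/n!$.

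For the error estimate I would start from the identity $y_{J_+}=\bigl(\sum_{n=0}^{\infty}A^n S_J^n\bigr)y_0$ recorded in (\ref{chensig2}). Subtracting the truncated sum leaves the tail $\sum_{n=N}^{\infty}A^n S_J^n y_0$, and estimating term by term gives
\[
\left\Vert y_{J_+} - \sum_{0}^{N-1} A^n S_J^n\, y_0 \right\Vert \le \sum_{n=N}^{\infty}\Vert A^n S_J^n\Vert\,\Vert y_0\Vert \le \left(\sum_{n=N}^{\infty} \frac{\Vert A\Vert^n \vert\gamma_J\vert^n}{n!}\right)\Vert y_0\Vert ,
\]
where $\Vert A^n S_J^n\Vert\le\Vert A\Vert^n\Vert S_J^n\Vert$ follows from submultiplicativity of the operator-composition norm on $\Hom(F,F)$ together with the cross-norm property on $E^{\otimes n}$, and the last inequality inserts the factorial bound from the first stage.

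The main obstacle is structural rather than computational: the entire argument rests on the tensor powers $E^{\otimes n}$ carrying admissible cross-norms, so that both $\Vert a\otimes b\Vert\le\Vert a\Vert\,\Vert b\Vert$ and the operator bound above hold, and on the Bochner integrability of the integrand, which in turn requires $\gamma$ to have genuinely bounded variation so that the length measure $\Vert d\gamma\Vert$ and the arc-length reparameterisation are well defined. Once these standing assumptions on the analytic setup are granted, the estimate reduces to the elementary simplex-volume computation. The genuinely hard point, which belongs to the rough-path regime $p>1$ flagged in the section title and which I would not attempt by this route, is that the analogous factorial decay no longer follows from such a crude bound and instead requires the neo-classical inequality.
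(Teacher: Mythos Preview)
Your proof is correct and follows essentially the same route as the paper: reparameterise $\gamma$ to arc length, reduce $\Vert S_J^n\Vert$ to the Lebesgue volume of the ordered simplex $\{0\le s_1\le\ldots\le s_n\le L\}$, and then read off the tail estimate from the series representation (\ref{chensig2}). If anything you are more explicit than the paper about the standing hypotheses (cross-norm on the tensor powers, Bochner integrability) that make the norm-inside-the-integral step legitimate.
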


\begin{proof}
Because the Signature of the path always solves the characteristic
differential equation it follows that one can reparameterise the path $%
\gamma $ without changing the Signature of $\gamma $. Reparameterise $\gamma 
$ so that it is defined on an interval $J$ of length $\left\vert \gamma
\right\vert $ and runs at unit speed. Now there are $n!$ disjoint simplexes
inside a cube obtained by different permuted rankings of the coordinates and
thus 
\begin{eqnarray*}
\left\Vert S_{J}^{n}\right\Vert &:&=\left\Vert \underset{u_{1}\leq \ldots
\leq u_{n}\in J^{n}}{\idotsint }d\gamma _{u_{1}}\otimes \ldots \otimes
d\gamma _{u_{n}}\right\Vert \\
&=&\left\Vert \underset{u_{1}\leq \ldots \leq u_{n}\in J^{n}}{\idotsint }%
\dot{\gamma}_{u_{1}}\otimes \ldots \otimes \dot{\gamma}_{u_{n}}du_{1}\ldots
du_{n}\right\Vert \\
&=&\underset{u_{1}\leq \ldots \leq u_{n}\in J^{n}}{\idotsint }\left\Vert 
\dot{\gamma}_{u_{1}}\otimes \ldots \otimes \dot{\gamma}_{u_{n}}\right\Vert
du_{1}\ldots du_{n} \\
&=&\underset{u_{1}\leq \ldots \leq u_{n}\in J^{n}}{\idotsint }du_{1}\ldots
du_{n} \\
&=&\frac{\left\vert \gamma _{J}\right\vert ^{n}}{n!}.
\end{eqnarray*}%
from which the second estimate is clear.
\end{proof}

The Poisson approximation of a normal distribution one learns at high school
ensures that the estimates on the right become very sharply estimated in
terms of $\lambda \rightarrow \infty $ and pretty effective as soon as $%
N\geq \left\Vert A\right\Vert \left\vert \gamma _{J}\right\vert +\lambda 
\sqrt{\left\Vert A\right\Vert \left\vert \gamma _{J}\right\vert }.$

\begin{remark}
The uniform convergence of the series 
\[
\sum_{n=0}^{N-1}A^{n}\underset{J_{-}\leq u_{1}\leq \ldots \leq u_{n}\leq
J_{+}}{\idotsint }d\gamma _{u_{1}}\otimes \ldots \otimes d\gamma
_{u_{n}}y_{0}
\]
and the obvious continuity of the terms of the series in the inputs $\left(
A,\gamma ,y_{0}\right) $ guarantees that the response $y_{T}$ is jointly
continuous (uniform limits of continuous functions are continuous) in $%
\left( A,\gamma ,y_{0}\right) $ where $\gamma $ is given the topology of $1$%
-variation (or any of the rough path metrics). It is already the case that 
\[
\gamma \rightarrow \underset{J_{-}\leq u_{1}\leq u_{2}\leq J_{+}}{\idotsint }%
d\gamma _{u_{1}}\otimes d\gamma _{u_{2}}
\]%
fails the closed graph property in the uniform metric.
\end{remark}

\section{The Log Signature}

It is easy to see that the Signature of a path segment actually takes its
values in a very special curved subspace of the tensor algebra. Indeed, Chen
noted that the map $S$ is a homomorphism of path segments with concatenation
into the algebra, and reversing the path segment produces the inverse
tensor. As a result one sees that the range of the map is closed under
multiplication and has inverses so it is a group (inside the grouplike
elements) in the tensor series. It is helpful to think of the range of this
Signature map as a curved space in the tensor series. As a result there is a
lot of valuable structure. One important map is the logarithm; it is one to
one on the group and provides a flat parameterisation of the group in terms
of elements of the free Lie series.

\begin{definition}
If $\gamma _{t}\in E$ is a path segment and $S$ is its Signature then 
\begin{eqnarray*}
S &=&1+S^{1}+S^{2}+\ldots \ \forall i,\ S^{i}\in E^{\otimes i} \\
\log \left( 1+x\right) &=&x-x^{2}/2+\ldots \\
\log S &=&\left( S^{1}+S^{2}+\ldots \right) -\left( S^{1}+S^{2}+\ldots
\right) ^{2}/2+\ldots
\end{eqnarray*}%
The series $\log S=\left( S^{1}+S^{2}+\ldots \right) -\left(
S^{1}+S^{2}+\ldots \right) ^{2}/2+\ldots $ which is well defined, is
referred to as the log Signature of $\gamma .$
\end{definition}

Because the space of tensor series $T\left( \left( E\right) \right)
:=\bigoplus_{0}^{\infty }E^{\otimes n}$ is a unital associative algebra
under $\otimes ,+$ it is also a Lie algebra, and with $\left[ A,B\right]
:=A\otimes B-B\otimes A.$

\begin{definition}
There are several canonical Lie algebras associated to $T\left( \left(
E\right) \right) $; we use the notation $\mathcal{L}\left( E\right) $ for
the algebra generated by $E$ (the space of Lie polynomials), $\mathcal{L}%
^{\left( n\right) }\left( E\right) $ the projection of this into $T^{\left(
n\right) }\left( E\right) =T\left( \left( E\right) \right)
/\bigoplus_{n+1}^{\infty }E^{\otimes m}$ (the Lie algebra of the free
nilpotent group $G^{n}$ of n steps) and $\mathcal{L}\left( \left( E\right)
\right) $ the projective limit of the $\mathcal{L}^{\left( n\right) }\left(
E\right) $ (the Lie Series).
\end{definition}

Because we are working in characteristic zero, we may take the exponential,
and this recovers the Signature, so no information is lost. A key
observation of Chen \cite{chen1957integration}was that if $\gamma $ is a
path segment then $\log S\left( \gamma \right) \in \mathcal{L}\left( \left(
E\right) \right) $. The map from paths \cite%
{Rashevski1938aboutconecting,chow1939sisteme}to $\mathcal{L}^{\left(
n\right) }\left( E\right) $ via the projection $\pi _{n}:T\left( \left(
E\right) \right) \rightarrow T^{\left( n\right) }\left( E\right) $ is onto.
Up to equivalence under a generalised notion of reparameterisation of paths
known as treelike equivalence, the map from paths $\gamma $ of finite length
in $E$ to their Signatures $S\left( \gamma \right) \in T\left( \left(
E\right) \right) $ or log-Signatures $\log S\in \mathcal{L}\left( \left(
E\right) \right) $ is injective \cite{hambly2010uniqueness}. Treelike
equivalence is an equivalence relation on paths of finite variation, each
class has a unique shortest element, and these tree reduced paths form a
group. However the range of the log-Signature map in $\mathcal{L}\left(
\left( E\right) \right)$, although well behaved under integer multiplication
is not closed under integer division \cite{lyonssidorova2006ontheradius} and
so the Lie algebra of the group of tree reduced paths is well defined but
not a linear space; it is altogether a more subtle object.

Implicit in the definition of a controlled differential equation%
\[
dy_{t}=f\left( y_{t}\right) d\gamma _{t},\ y_{0}=a
\]%
is the map $f$. This object takes an element $e\in E$ and an element $y\in F$
and produces a second vector in $F$, representing the infinitesimal change
to the state $y$ of the system that will occur if $\gamma $ is changed
infinitesimally in the direction $e$. This author is clear that the best way
to think about $f$ is as a linear map from the space $E$ into the vector
fields on $F$. In this way one can see that the integral of $f$ along $%
\gamma $ in its simplest form is a path in the Lie algebra and that in
solving the differential equation we are developing that path into the
group. Now, at least formally, the vector fields are a Lie algebra (for the
diffemorphisms of F) and subject to the smoothness assumptions we can take
Lie brackets to get new vector fields. Because $\mathcal{L}\left( \left(
E\right) \right) $ is the free Lie algebra over $E$ (Chapter II, \cite%
{bourbaki1975lie}) any linear map $f$ of $E$ into a Lie algebra $\mathfrak{g}
$ induces a unique Lie map extension $f_{\ast }$ to a Lie map from $\mathcal{%
L}\left( \left( E\right) \right) $ to $\mathfrak{g}$. This map can be
readily implemented and is well defined because of the abstract theory%
\begin{eqnarray*}
e &\rightarrow &f\left( e\right) ~~\ \text{a vector field} \\
e_{1}e_{2}-e_{2}e_{1} &\rightarrow &f\left( e_{1}\right) f\left(
e_{2}\right) -f\left( e_{2}\right) f\left( e_{1}\right) ~~\ \text{a vector
field} \\
\tilde{f} &:&\mathcal{L}^{\left( n\right) }\left( E\right) \rightarrow \text{%
vector fields.}
\end{eqnarray*}%
although in practice one does not take the map to the full projective limit.

\section{The ODE method}

The linkage between truncations of the log-Signature in $\mathcal{L}\left(
\left( E\right) \right) $ and vector fields on $Y$ is a practical one for
modelling and understanding controlled differential equations. It goes well
beyond theory and underpins some of the most effective and stable numerical
approaches (and control mechanisms) for translating the information in the
control $\gamma $ into information about the response.

If $dy_{t}=f\left( y_{t}\right) d\gamma _{t}$, and $y_{J_{-}}=a$ then how
can we use the first few terms of the (log-)Signature of $\gamma $ to
provide a good approximation to $y_{J_{+}}$? We could use picard iteration,
or better an euler method based on a Taylor series in terms of the
Signatures. Picard iteration for $\exp z$ already illustrates one issue.
Picard interation yields a power series as approximation - fine if $z=100, $%
but awful if $x=-100$. However, there is a more subtle problem to do with
stability that almost all methods based on Taylor series have - stability -
they can easily produce approximations that are not feasible. These are
aggravated in the controlled case because of the time varying nature of the
systems. It can easily happen that the solutions to the vector fields are
hamiltonian etc. The ODE method uses the first few terms of the Signature to
construct a time invariant ODE (vector field) that if one solves it for unit
time, it provides an approximation to the desired solution. It pushes the
numerics back onto state of the art ODE solvers. Providing the ODE solver is
accurate and stable then the approximation to $y$ will also be. One can use
symplectic solvers etc. At the level of rough paths, the approximation is
obtained by replacing the path $\gamma $ with a new rough path $\hat{\gamma}$
(a geodesic in the nilpotent group $G^{n})$ with the same first few terms in
the Signature; this guarantees the feasibility of the approximations. Today,
rough path theory can be used to estimate the difference between the
solution and the approximation in terms of the distance between $\gamma $
and $\hat{\gamma}$ even in infinite dimensions.\cite{castell1995efficient}%
\cite{boutaib2013dimension}

\begin{remark}
A practical numerical scheme can be built as follows.

\begin{enumerate}
\item Describe $\gamma $ over a short interval $J$ in terms of first few
terms of $\log S\left( \gamma _{\left[ J_{-},J_{+}\right] }\right) $
expressed as a linear combination of terms of a fixed hall basis:%
\begin{eqnarray*}
\log S_{J} &=&l^{1}+l^{2}+\ldots \in \mathcal{L}\left( \left( E\right)
\right) \\
l^{\left( n\right) } &=&\pi _{n}\left( \log S_{J}\right) =l^{1}+\ldots
+l^{n}\in \mathcal{L}^{\left( n\right) }\left( E\right) \\
l^{1} &=&\sum_{i}\lambda _{i}e_{i} \\
l^{2} &=&\sum_{i<j}\lambda _{ij}\left[ e_{i},e_{j}\right] , \\
&&\text{\ldots }
\end{eqnarray*}%
and use this information to produce a path dependent vector field $V=\tilde{f%
}\left( l^{\left( n\right) }\right) .$

\item Use an appropriate ODE solver to solve the ODE $\dot{x}_{t}=V\left(
x_{t}\right) $, where $x_{0}=y_{J_{-}}$. A stable high order approximation
to $y_{J_{+}}$ is given by $x_{J_{+}}$.

\item Repeat over small enough time steps for the high order approximations
to be effective.

\item The method is high order, stable, and corresponding to replacing $%
\gamma $ with a piecewise geodesic path on successively finer scales.
\end{enumerate}
\end{remark}

\section{Going to Rough Paths}

As this is a survey, we have deliberately let the words rough path enter the
text before they are introduced more formally. Rough path theory answers the
following question. Suppose that $\gamma $ is a smooth path but still on
normal scales, a highly rough and oscillatory path. Suppose that we have
some smooth system $f$ . Give a simple metric on paths $\gamma $ and a
continuity estimate that ensures that if two paths that are close in this
metric then their responses are quantifiably close as well. The estimate
should only depend on $f$ through its smoothness. There is such a theory 
\cite{lyons2007differential}, and a family of rough path metrics which make
the function $\gamma \rightarrow y$ uniformly continuous. The completion of
the smooth paths $\gamma $ under these metrics are the rough paths we speak
about. The theory extends to an infinite dimensional one and the estimates
are uniform in a way that does not depend on dimension.

There are many sources for this information on rough paths for different
kinds of audience and we do not repeat that material. We have mentioned that
two smooth paths have quantifiable close responses to a smooth $f$ over a
fixed time interval if the first terms in the Signature agree over this time
interval. We can build this into a metric:%
\[
d_{p}\left( \gamma |_{J},\hat{\gamma}|_{J}\right) =\sup_{J_{-}\leq u_{1}\leq
\ldots \leq u_{n}\leq J_{+}}\sum_{i}\max_{m\leq \left\lfloor p\right\rfloor
}\left\Vert S^{m}\left( \gamma |_{\left[ u_{i},u_{i+1}\right] }\right)
-S^{m}\left( \hat{\gamma}|_{\left[ u_{i},u_{i+1}\right] }\right) \right\Vert
^{p/m}
\]%
and providing the system is $Lip\left( p+\varepsilon \right) $ the response
will behave uniformly with the control. The completion of the piecewise
smooth paths under $d_{p}$ are $p$-variation paths. They do not have
smoothness but they do have a "top down" description and can be viewed as
living in a $\left\lfloor p\right\rfloor $-step nilpotent group over $E.$

It is worth distinguishing the Kolmogorov and the rough path view on paths.
In the former, one considers fixed times $t_{i}$, open sets $O_{i}$, and
considers the probability that for all $i$, $x_{t_{i}}\in O_{i}$. In other
words the emphasis is on where the path is at given times. This gated
description will never capture the rough path; parameterisation is
irrelevant but increments over small intervals $\left[ u_{i},u_{i+1}\right] $%
, are critical. More accurately one describes a path through an examination
of the effect of it's path segment into a simple nonlinear system (the lift
onto a nilpotent group). Knowing this information in an analytically
adequate way is all one needs to know to predict the effect of the path on a
general system.

The whole rough path theory is very substantial and we cannot survey it
adequately here. The range is wide, and is related to any situation where
one has a family of non-commuting operators and one wants to do analysis on
apparently divergent products and for example it is interesting to
understand the paths one gets as partial integrals of complex Fourier
transform as the nonlinear Fourier transform is a differential equation
driven by this path. Some results have been obtained in this direction \cite%
{lyons2013partial} while the generalisations to spatial contexts are so huge
that they are spoken about elsewhere at this congress. Many books are now
written on the subject \cite{friz2010multidimensional}.and new lecture notes
by Friz are to appear soon with recent developments. So in what is left of
this paper we will focus on one topic the Signature of a path and the
expected Signature of the path with a view to partially explaining how it is
really an extension of Taylor's theorem to various infinite dimensional
groups, and how we can get practical traction from this perspective. One key
point we will not mention is that using Taylor's theorem twice works! This
is actually a key point that the whole rough path story depends on and which
validates its use. One needs to read the proofs to understand this
adequately and, except for this sentence, suppress it completely here.

\section{Coordinate \textbf{Iterated} Integrals}

In this short paper we have to have a focus, and as a result we cannot
explore the analysis and algebra needed to fully describe rough paths or to
discuss the spatial generalisations directly even though they are having
great impact\cite{hairer2014regularity}\cite{hairer2014theory}. Nonetheless
much of what we say can be though of as useful foundations for this work. We
are going to focus on the Signature as a tool for understanding paths and as
a new tool to help with machine learning.

The essential remark may seem a bit daunting to an analyst, but will be
standard to others. \emph{The dual of the enveloping algebra of a
group(like) object has a natural abelian product structure and linearises
polynomial functions on a group.} This fact allows one to use linear
techniques on the linear spaces to approximate generic smooth (and
nonlinear) functions on the group. Here the group is the "group" of paths.

Monomials are special functions on $\mathbb{R}^{n}$, and polynomials are
linear combinations of these monomials. Because monomials span an algebra,
the polynomials are able to approximate any continuous function on a compact
set. Coordinate iterated integrals are linear functionals on the tensor
algebra and at the same time they are the monomials or the features on path
space.

\begin{definition}
Let $\boldsymbol{e}=e_{1}\otimes \ldots \otimes e_{n}\in \left( E^{\ast
}\right) ^{\otimes n}\subset T\left( E^{\ast }\right) $, and $\phi _{%
\boldsymbol{e}}\left( \gamma \right) :=\left\langle \boldsymbol{e,}S\left(
\gamma \right) \right\rangle $ then we call $\phi _{\boldsymbol{e}}\left(
\gamma \right) $ a coordinate iterated integral.
\end{definition}

\begin{remark}
Note that $S\left( \gamma \right) \in T\left( \left( E\right) \right)
=\bigoplus_{0}^{\infty }E^{\otimes n}$ and%
\begin{eqnarray*}
\phi _{\boldsymbol{e}}\left( \gamma \right) &=&\left\langle \boldsymbol{e,}%
S\left( \gamma \right) \right\rangle \\
&=&\underset{u_{1}\leq \ldots \leq u_{n}\in J^{n}}{\idotsint }\left\langle
e_{1},d\gamma _{u_{1}}\right\rangle \ldots \left\langle e_{n},d\gamma
_{u_{n}}\right\rangle
\end{eqnarray*}%
justifying the name. $\phi _{\boldsymbol{e}}$ is a real valued function on
Signatures of paths.
\end{remark}

\begin{lemma}
The shuffle product $\amalg $ on $T\left( E^{\ast }\right) $ makes $T\left(
E^{\ast }\right) $ a commutative algebra and corresponds to point-wise
product of coordinate integrals 
\[
\phi _{\boldsymbol{e}}\left( \gamma \right) \phi _{\boldsymbol{f}}\left(
\gamma \right) =\phi _{\boldsymbol{e\amalg f}}\left( \gamma \right)
\]
\end{lemma}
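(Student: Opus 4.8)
The plan is to prove the product identity by induction on the total degree $m+n$, where $\boldsymbol{e}=e_{1}\otimes\cdots\otimes e_{m}$ and $\boldsymbol{f}=f_{1}\otimes\cdots\otimes f_{n}$, and then to read off the algebraic statement. First I would work not with the endpoint values but with the running iterated integrals $\phi_{\boldsymbol{e}}(\gamma)_{t}$ obtained by integrating over the simplex inside $[J_{-},t]$. The basic observation is that differentiating in $t$ strips off the last letter:
\[
d\,\phi_{e_{1}\otimes\cdots\otimes e_{m}}(\gamma)_{t}=\phi_{e_{1}\otimes\cdots\otimes e_{m-1}}(\gamma)_{t}\,\langle e_{m},d\gamma_{t}\rangle,
\]
with the empty word giving the constant $1$. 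This recursive structure on the analytic side is the exact mirror of the recursive definition of the shuffle product on the algebraic side,
\[
\boldsymbol{e}\amalg\boldsymbol{f}=\big((e_{1}\otimes\cdots\otimes e_{m-1})\amalg\boldsymbol{f}\big)\otimes e_{m}+\big(\boldsymbol{e}\amalg(f_{1}\otimes\cdots\otimes f_{n-1})\big)\otimes f_{n},
\]
and matching these two recursions is the whole content of the lemma.

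The inductive step is an integration by parts. Since $t\mapsto\phi_{\boldsymbol{e}}(\gamma)_{t}$ and $t\mapsto\phi_{\boldsymbol{f}}(\gamma)_{t}$ are of bounded variation, the Stieltjes product rule gives
\[
d\big(\phi_{\boldsymbol{e}}(\gamma)_{t}\,\phi_{\boldsymbol{f}}(\gamma)_{t}\big)=\big(d\phi_{\boldsymbol{e}}(\gamma)_{t}\big)\,\phi_{\boldsymbol{f}}(\gamma)_{t}+\phi_{\boldsymbol{e}}(\gamma)_{t}\,\big(d\phi_{\boldsymbol{f}}(\gamma)_{t}\big).
\]
Substituting the last-letter relation into each term and applying the induction hypothesis to the two products of strictly lower total degree rewrites the right-hand side as
\[
\phi_{(e_{1}\cdots e_{m-1})\amalg\boldsymbol{f}}(\gamma)_{t}\,\langle e_{m},d\gamma_{t}\rangle+\phi_{\boldsymbol{e}\amalg(f_{1}\cdots f_{n-1})}(\gamma)_{t}\,\langle f_{n},d\gamma_{t}\rangle.
\]
Read backwards through the last-letter relation, this is precisely $d\,\phi_{\boldsymbol{e}\amalg\boldsymbol{f}}(\gamma)_{t}$. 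Integrating from $J_{-}$, where the running integrals of positive degree vanish and the recursion bottoms out at the trivial cases in which $\boldsymbol{e}$ or $\boldsymbol{f}$ is the empty word, yields $\phi_{\boldsymbol{e}}(\gamma)\,\phi_{\boldsymbol{f}}(\gamma)=\phi_{\boldsymbol{e}\amalg\boldsymbol{f}}(\gamma)$ at $t=J_{+}$.

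For the algebraic assertion, commutativity and associativity of $\amalg$ follow directly from its combinatorial description as the sum over all order-preserving interleavings of the two letter-sequences; alternatively they are forced by the identity itself, since pointwise multiplication of functions is commutative and associative and the iterated-integral functionals separate paths. I would simply verify the combinatorial symmetry, which is routine.

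The step I expect to be the genuine obstacle is the honest justification of the product rule used in the inductive step. In the continuous bounded-variation setting this is classical Stieltjes integration by parts, but one must check that no diagonal boundary term is lost — equivalently, in the geometric picture the product of simplices $\{u_{1}\le\cdots\le u_{m}\}\times\{v_{1}\le\cdots\le v_{n}\}$ decomposes, away from the diagonals $\{u_{i}=v_{j}\}$, into the $(m+n)$-simplices indexed by the shuffles, and one must argue these diagonals carry no mass. For continuous $\gamma$ the controlling measures are non-atomic and the diagonals are null, so the decomposition is exact; were $\gamma$ permitted jumps the product rule would acquire a correction supported on the diagonal and the clean identity would require modification, so restricting to continuous paths (or to the Hoff-type lift used elsewhere in the paper) is what makes the statement hold as written.
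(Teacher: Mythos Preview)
Your argument is correct and is the standard proof of Ree's shuffle identity: the induction on total degree, the ``last-letter'' recursion for iterated integrals, and the Stieltjes product rule combine exactly as you describe, and your remark about the diagonals being null for continuous bounded-variation $\gamma$ is the right justification for the clean decomposition.

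The paper itself does not supply a proof of this lemma. It is stated as a known result, with the comment immediately following it attributing the identity to Ree and explaining its significance (that the pointwise product of two linear functionals on $T((E))$ restricts, on grouplike elements, to another linear functional). So there is no ``paper's own proof'' to compare against; your integration-by-parts induction is precisely the classical argument one would give, and nothing in it needs adjustment.
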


This last identity, which goes back to Ree, is important because it says
that if we consider two linear functions on $T\left( \left( E\right) \right) 
$ and multiply them together then their product - which is quadratic
actually agrees with a linear functional on the group like elements. The
shuffle product identifies the linear functional that does the job.

\begin{lemma}
Coordinate iterated integrals, as features of paths, span an algebra that
separates Signatures and contains the constants.
\end{lemma}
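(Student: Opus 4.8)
The plan is to verify the three assertions --- closure as an algebra, containment of the constants, and separation of Signatures --- in turn, after extending $\phi$ linearly by setting $\phi_\xi(\gamma) := \langle \xi, S(\gamma)\rangle$ for any $\xi \in T(E^*)$, and letting $\mathcal{A}$ denote the linear span of the functions $\phi_{\boldsymbol{e}}$ as $\boldsymbol{e}$ ranges over all words (pure tensors) in $T(E^*)$.

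\textbf{Algebra and constants.} Closure under pointwise multiplication is immediate from the preceding shuffle-product lemma: for two pure tensors $\boldsymbol{e}, \boldsymbol{f}$ the identity $\phi_{\boldsymbol{e}} \phi_{\boldsymbol{f}} = \phi_{\boldsymbol{e} \amalg \boldsymbol{f}}$ expresses the product again as $\phi$ of an element of $T(E^*)$, namely the shuffle $\boldsymbol{e} \amalg \boldsymbol{f}$, hence back inside $\mathcal{A}$; bilinearity of both the shuffle and the pointwise product then extends this to arbitrary elements of $\mathcal{A}$, so $\mathcal{A}$ is a commutative algebra. For the constants I would take $\boldsymbol{e}$ to be the empty word, the unit $\mathbf{1}$ in the degree-zero component of $T(E^*)$: since the degree-zero term $S^0(\gamma)$ of every Signature equals $1$, one has $\phi_{\mathbf{1}}(\gamma) = 1$ for all $\gamma$, so $\mathbf{1} \in \mathcal{A}$ and every constant $c = c\,\phi_{\mathbf{1}}$ lies in $\mathcal{A}$.

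\textbf{Separation.} Suppose $S(\gamma) \neq S(\hat{\gamma})$. Decomposing both into their graded pieces in $\bigoplus_n E^{\otimes n}$, there is a least degree $n$ at which they differ, so $v := S^n(\gamma) - S^n(\hat{\gamma})$ is a nonzero degree-$n$ tensor. It then suffices to produce $\boldsymbol{e} \in (E^*)^{\otimes n}$ with $\langle \boldsymbol{e}, v\rangle \neq 0$, for in that case $\phi_{\boldsymbol{e}}(\gamma) - \phi_{\boldsymbol{e}}(\hat{\gamma}) = \langle \boldsymbol{e}, v\rangle \neq 0$ and the two Signatures are separated by a member of $\mathcal{A}$.

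\textbf{Main obstacle.} The one genuinely non-formal point is the non-degeneracy of the canonical pairing between $(E^*)^{\otimes n}$ and the space in which $v$ lives. When $E$ is finite dimensional this is classical: $(E^*)^{\otimes n}$ is the full dual of $E^{\otimes n}$, the pairing is perfect, and a nonzero $v$ is always detected. In the infinite-dimensional Banach setting the degree-$n$ tensor $v$ lies in a completed tensor power rather than the algebraic one and there is no perfect pairing, so the claim must be read as the assertion that the functionals arising from $(E^*)^{\otimes n}$ separate points of that completed space. I would derive this from Hahn--Banach --- which furnishes a separating family $E^*$ of functionals on $E$ --- together with the standard fact that $n$-fold tensor products of a separating family of functionals again separate points of the $n$-fold tensor power. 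This reduces separation of Signatures entirely to that point-separation fact and completes the argument.
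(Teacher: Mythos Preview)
The paper does not actually prove this lemma: it is stated without argument and followed immediately by a paragraph on its significance. The only relevant ingredient the paper supplies is the preceding shuffle-product identity (attributed to Ree), from which closure under multiplication follows; nothing is said explicitly about constants or separation. So there is no ``paper's own proof'' to compare against --- your proposal is filling in what the paper leaves unargued.

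On its merits your argument is correct. The algebra and constants parts are exactly right and are the standard reductions. For separation your duality argument is the natural one; in the finite-dimensional case --- which is what the surrounding text has in mind, cf.\ the sentence right after the lemma about ``only finitely many of each degree if $E$ is finite dimensional'' --- it is complete as written. Your caveat about the infinite-dimensional Banach setting is honest: the claim that elementary tensors in $(E^{\ast})^{\otimes n}$ separate points of the completed $n$-th tensor power does hold for the usual (projective or injective) completions by the Hahn--Banach-plus-induction argument you sketch, so the issue you flag is a technicality rather than a genuine gap.
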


This lemma is as important for understanding smooth functions on path spaces
as monomials are for understanding smooth functions on $\mathbb{R}^{n}.$%
There are only finitely many of each degree if $E$ is finite dimensional
(although the dimension of the spaces grow exponentially) \cite%
{lyons2007differential}. We will see later that this property is important
for machine learning and nonlinear regression applications but first we want
to explain how the same remark allows one to understand measures on paths
and formulate the notion of Fourier and Laplace transform.

\section{Expected Signature}

The study of the expected Signature was initiated by Fawcett in his thesis 
\cite{fawcett2002problems}. He proved

\begin{proposition}
Let $\mu $ be a compactly supported probability measure on paths $\gamma $
with Signatures in a compact set $K$. Then $\hat{S}=\mathbb{E}_{\mu }\left(
S\left( \gamma \right) \right) $ uniquely determines the law of $S\left(
\gamma \right) .$
\end{proposition}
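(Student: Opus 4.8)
The plan is to recognise this as a \emph{moment problem} on the compact set $K$ and to solve it by a Stone--Weierstrass argument, with the shuffle product supplying the algebra structure. Write $\nu := S_{\ast}\mu$ for the law of $S(\gamma)$, a Borel probability measure supported on $K\subset T((E))$. The first observation is that $\hat{S}$ encodes exactly the $\nu$-averages of the coordinate iterated integrals: since each $\langle\boldsymbol{e},\cdot\rangle$ is a bounded linear functional on the tensor algebra and $K$ is compact, the vector-valued expectation $\hat{S}$ exists and a bounded linear functional passes through it, giving
\[
\int_{K}\phi_{\boldsymbol{e}}\,d\nu=\mathbb{E}_{\mu}\big[\langle\boldsymbol{e},S(\gamma)\rangle\big]=\langle\boldsymbol{e},\hat{S}\rangle .
\]
By linearity of the integral, $\hat{S}$ therefore determines $\int_{K}f\,d\nu$ for every $f$ in the linear span $\mathcal{A}$ of the functionals $\phi_{\boldsymbol{e}}$.

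Next I would invoke the two lemmas above. By the shuffle lemma $\phi_{\boldsymbol{e}}\phi_{\boldsymbol{f}}=\phi_{\boldsymbol{e}\amalg\boldsymbol{f}}$, so $\mathcal{A}$ is closed under pointwise multiplication and is a genuine subalgebra of the real continuous functions $C(K;\mathbb{R})$; it contains the constants (the degree-zero term gives $\phi\equiv 1$) and, by the separation lemma, it separates the points of $K$. The Stone--Weierstrass theorem then applies on the compact space $K$ and shows that $\mathcal{A}$ is uniformly dense in $C(K;\mathbb{R})$.

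Finally I would combine density with the Riesz representation theorem. Given $g\in C(K;\mathbb{R})$, choose $f_{k}\in\mathcal{A}$ with $f_{k}\to g$ uniformly; since $\nu$ is a probability measure, $\int f_{k}\,d\nu\to\int g\,d\nu$, and each $\int f_{k}\,d\nu$ is determined by $\hat{S}$, so $\int g\,d\nu$ is determined by $\hat{S}$ as well. Thus $\hat{S}$ fixes the value of the functional $g\mapsto\int_{K}g\,d\nu$ on all of $C(K;\mathbb{R})$; by Riesz, a finite Borel measure on the compact metric space $K$ is uniquely determined by this functional, so $\nu$ --- the law of $S(\gamma)$ --- is uniquely determined by $\hat{S}$.

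I expect the substance of the argument to sit entirely in the two cited lemmas rather than in the topology. The genuinely indispensable ingredients are that the span $\mathcal{A}$ really is an algebra, which is \emph{precisely} the content of the shuffle identity and not a formality, and that it separates Signatures, which is the deep Hambly--Lyons faithfulness packaged into the separation lemma; without either, Stone--Weierstrass would fail. The compact-support hypothesis does real work at both ends: it makes the functionals $\langle\boldsymbol{e},\cdot\rangle$ bounded on $K$ (hence all moments finite and the interchange legitimate) and it makes the moment problem \emph{determinate} by providing the compact metrisable domain on which both Stone--Weierstrass and the Riesz uniqueness statement are available. On a non-compact path space neither step is automatic, and that is where I would expect the only real difficulty to arise.
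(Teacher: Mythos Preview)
Your proof is correct and follows essentially the same route as the paper: compute $\mathbb{E}_{\mu}[\phi_{\boldsymbol{e}}(\gamma)]=\langle\boldsymbol{e},\hat{S}\rangle$, invoke the shuffle and separation lemmas to get an algebra separating points of $K$, and apply Stone--Weierstrass on the compact set $K$. The paper's argument is terser (it omits the explicit Riesz step and the limiting argument you spell out), but the strategy and the key ingredients are identical.
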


\begin{proof}
Consider $\mathbb{E}_{\mu }(\phi _{\boldsymbol{e}}\left( \gamma \right) ).$ 
\begin{eqnarray*}
\mathbb{E}_{\mu }(\phi _{\boldsymbol{e}}\left( \gamma \right) ) &=&\mathbb{E}%
_{\mu }\left( \left\langle \boldsymbol{e,}S\left( \gamma \right)
\right\rangle \right) \\
&=&\left\langle \boldsymbol{e,}\mathbb{E}_{\mu }\left( S\left( \gamma
\right) \right) \right\rangle \\
&=&\left\langle \boldsymbol{e,}\hat{S}\right\rangle
\end{eqnarray*}%
Since the $\boldsymbol{e}$ with the shuffle product form an algebra and
separate points of $K$ the Stone-Weierstrass Theorem implies they form a
dense subspace in $C\left( K\right) $ and so determine the law of the
Signature of $\gamma $.
\end{proof}

Given this lemma it immediately becomes interesting to ask how does one
compute $\mathbb{E}_{\mu }\left( S\right) $. Also, $\mathbb{E}_{\mu }\left(
S\right) $ is like a Laplace transform and will fail to exist for reasons of
tail behaviour of the random variables. Is there a characteristic function?
Can we identify the general case where the expected Signature determines the
law in the non-compact case. All of these are fascinating and important
questions. Partial answers and strong applications are emerging. One of the
earliest was the realisation that one could approximate effectively to a
complex measure such as Wiener measure by a measure on finitely many paths
that has the same expected Signature on $T^{\left( n\right) }\left( E\right) 
$\cite{lyons2004cubature,litterer2011cubature}.

\section{Computing expected Signatures}

Computing Laplace and Fourier transforms can often be a challenging problem
for undergraduates. In this case suppose that $X$ a Brownian motion with L%
\'{e}vy area on a bounded $C^{1}$ domain $\Omega \subset \mathbb{R}^{d},$%
stopped on first exit. The following result explains how one may construct
the expected Signature as a recurrence relation in PDEs\cite%
{ExpectedSignatureBM}.

\begin{theorem}
Let%
\begin{eqnarray*}
F\left( z\right) &:&=\mathbb{E}_{z}\left( S\left( X|_{\left[ 0,T_{\Omega }%
\right] }\right) \right) \\
F &\in &S\left( \left( \mathbb{R}^{d}\right) \right) \\
F &=&\left( f_{0},f_{1},\ldots ,\right)
\end{eqnarray*}%
Then $F$ satisfies and is determined by a PDE finite difference operator%
\begin{eqnarray*}
\Delta f_{n+2} &=&-\sum_{i=1}^{d}e_{i}\otimes e_{i}\otimes
f_{n}-2\sum_{i=1}^{d}e_{i}\otimes \frac{\partial }{\partial z_{i}}f_{n+1} \\
f_{0} &\equiv &1,\ f_{1}\equiv 0,\text{ and}\ f_{j}|_{\partial \Omega
}\equiv 0,\ j>0
\end{eqnarray*}
\end{theorem}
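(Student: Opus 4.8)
The plan is to exploit the fact, emphasised earlier, that the Signature solves the universal linear equation driven by the control, together with the strong Markov property of Brownian motion. Write $\tau=T_\Omega$ for the first exit time and $S_{0,t}$ for the Signature of $X$ over $[0,t]$. The starting observation is Chen's identity $S_{0,\tau}=S_{0,t}\otimes S_{t,\tau}$ for $t<\tau$. Conditioning on $\mathcal{F}_t$ and using the time--homogeneous Markov property to recognise $\mathbb{E}[S_{t,\tau}\mid\mathcal{F}_t]=F(X_t)$, one sees that
\[
M_t:=S_{0,t\wedge\tau}\otimes F(X_{t\wedge\tau})
\]
is a tensor-series valued martingale, since it equals $\mathbb{E}[S_{0,\tau}\mid\mathcal{F}_{t\wedge\tau}]$. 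The whole theorem falls out of the requirement that the drift of $M$ vanish.

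Next I would compute that drift by It\^{o}'s product rule. Since $X$ is Brownian motion with L\'evy area, the Signature satisfies $dS_{0,t}=S_{0,t}\otimes dX_t$ in the Stratonovich sense; converting to It\^{o} form introduces the correction $\tfrac12\sum_i S_{0,t}\otimes e_i\otimes e_i\,dt$, the generator of $X$ being $\tfrac12\Delta$. Applying It\^{o} to $S_{0,t}\otimes F(X_t)$ and collecting the three $dt$-contributions, namely the It\^{o} correction of the Signature, the term $\tfrac12 S_{0,t}\otimes\Delta F(X_t)$ from $F$, and the cross-variation $\sum_i S_{0,t}\otimes e_i\otimes\partial_i F(X_t)$ coming from the common driving noise, gives the drift
\[
S_{0,t}\otimes\Big(\tfrac12\Delta F+\sum_i e_i\otimes\partial_i F+\tfrac12\sum_i e_i\otimes e_i\otimes F\Big)(X_t).
\]
Because each $S_{0,t}$ is grouplike, hence invertible, vanishing of the drift forces the bracketed tensor to vanish at every point visited by $X$, i.e. throughout $\Omega$. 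Multiplying by $2$ and reading off the component in $\left(\mathbb{R}^d\right)^{\otimes(n+2)}$, noting that $\Delta$ preserves the grading, $e_i\otimes\partial_i(\cdot)$ raises it by one, and $e_i\otimes e_i\otimes(\cdot)$ raises it by two, yields exactly the stated recurrence $\Delta f_{n+2}=-\sum_i e_i\otimes e_i\otimes f_n-2\sum_i e_i\otimes\partial_i f_{n+1}$. The boundary data comes from the fact that a path started on $\partial\Omega$ exits immediately, so $F|_{\partial\Omega}$ is the unit tensor, giving $f_0\equiv1$ on $\partial\Omega$ and $f_j|_{\partial\Omega}\equiv0$ for $j>0$; the base cases $f_0\equiv1$ and $f_1\equiv0$ follow because the degree-zero term of any Signature is $1$ and because $\mathbb{E}_z[X_\tau]=z$ by optional stopping, $\tau$ having finite expectation on a bounded domain. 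Determinacy then follows by induction: with $f_0,f_1$ fixed, each $f_{n+2}$ solves a Poisson--Dirichlet problem whose right-hand side is built from the already-determined $f_n,f_{n+1}$, and on a bounded $C^1$ domain this Dirichlet problem has a unique solution.

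The main obstacle is analytic rather than algebraic: making the martingale computation legitimate. This requires first that the expected Signature $F(z)$ be finite at every level and regular enough, say $f_n\in C^2(\Omega)$, to apply It\^{o}'s formula and to interpret $\Delta f_{n+2}$ classically; and second, enough integrability to promote the local martingale $M$ to a genuine martingale and to justify optional stopping up to $\tau$. The finiteness and moment control cannot be read off the bounded-variation bound $\|S_J^n\|\le|\gamma_J|^n/n!$, since Brownian paths have infinite length; instead one must control the moments of the iterated Stratonovich integrals up to $\tau$ using the exponential integrability of the exit time of a bounded domain, and then bootstrap the regularity of each $f_n$ from elliptic theory for the successive Poisson problems on the $C^1$ domain. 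Once this regularity and integrability are in hand, the identification of the drift and the cancellation of $S_{0,t}$ are routine.
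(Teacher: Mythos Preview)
The paper does not actually prove this theorem; it is stated with a citation to \cite{ExpectedSignatureBM} (Lyons--Ni) and then immediately used. Your argument is correct and is precisely the strategy of that cited reference: form the martingale $M_t=S_{0,t\wedge\tau}\otimes F(X_{t\wedge\tau})$ from Chen's identity and the strong Markov property, compute its It\^o drift, and read off the graded recursion from the vanishing-drift condition, with the boundary data coming from $F|_{\partial\Omega}=1$ and the base cases $f_0\equiv1$, $f_1\equiv0$ from the trivial degree-zero term and optional stopping. Your identification of the analytic obstacles (finiteness and $C^2$-regularity of the $f_n$, uniform integrability to upgrade the local martingale) is also accurate and is exactly where the work in \cite{ExpectedSignatureBM} lies; the algebra you wrote is the easy part.
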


Combining this result with Sobolev and regularity estimates from PDE theory
allow one to extract much nontrivial information about the underlying
measure although it is still open whether in this case the expected
Signature determines the measure. This question is difficult even for
Brownian motion on $\min(T_{\tau}, t)$ although (unpublished) it looks as if
the question can be resolved.

Other interesting questions about expected Signatures can be found for
example in \cite{boedihardjo2013uniqueness}.

\section{Characteristic Functions of Signatures}

It is possible to build a characteristic function out of the expected
Signature by looking at the linear differential equations corresponding to
development of the paths into finite dimensional unitary groups. These
linear images of the Signature are always bounded and so expectations always
make sense.

Consider $SU\left( d\right) \subset M\left( d\right) $ and realise $su\left(
d\right) $ as the space of traceless Hermitian matrices and consider 
\begin{eqnarray*}
\psi &:&E\rightarrow su\left( d\right) \\
d\Psi _{t} &=&\psi \left( \Psi _{t}\right) d\gamma _{t}.
\end{eqnarray*}

Essential features of the co-ordinate interated integrals included that they were linear functions on the 
tensor algebra, that they were real valued functions that separated signatures, and that they spanned an algebra.

It is core to rough path theory that any representation of paths via a linear controlled equation can also be regarded as a linear function and that products can also be represented as sums. If one can show that products associated to the finite dimensional unitary groups can be expressed as sums of finite linear combinations of finite dimensional unitary representations, and add an appropriate topology on grouplike elements, one can repeat the ideas outlined above but now with expectations that always exist and obtain the analogue of characteristic function.

\begin{theorem}
$\Psi _{t}$ is a linear functional on the tensor algebra restricted to the
Signatures $S\left( \gamma |_{\left[ 0,t\right] }\right) $ and is given by a
convergent series. It is bounded and so its expectation as $\gamma $ varies
randomly always makes sense. The function $\psi \rightarrow \mathbb{E}\left(
\Psi _{J_{+}}\left( S\right) \right) $ is an extended characteristic
function.
\end{theorem}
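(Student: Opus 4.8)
The plan is to read the development equation $d\Psi _{t}=\psi \left( \Psi _{t}\right) d\gamma _{t}$, $\Psi _{0}=I$, as the linear controlled differential equation of the earlier section, with $\psi $ playing the r\^{o}le of $A$. Extending $\psi :E\rightarrow su\left( d\right) \subset M\left( d\right) $ to the unique algebra homomorphism $\psi _{\ast }:T\left( \left( E\right) \right) \rightarrow M\left( d\right) $ fixed by $\psi _{\ast }\left( e_{1}\otimes \cdots \otimes e_{n}\right) =\psi \left( e_{1}\right) \cdots \psi \left( e_{n}\right) $, the Picard formula (\ref{chensig2}) gives
\[
\Psi _{J_{+}}=\sum_{n=0}^{\infty }\psi ^{n}S_{J}^{n}=\psi _{\ast }\bigl( S\left( \gamma \right) \bigr),
\]
so each matrix entry $\left( \Psi _{J_{+}}\right) _{ij}=\left\langle \ell _{ij},S\left( \gamma \right) \right\rangle $ is a fixed linear functional $\ell _{ij}\in T\left( \left( E\right) \right) ^{\ast }$ evaluated on the Signature; this is the first assertion. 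Convergence is immediate from the length estimate $\left\Vert S_{J}^{n}\right\Vert \leq \left\vert \gamma _{J}\right\vert ^{n}/n!$ proved earlier, since $\sum_{n}\left\Vert \psi ^{n}S_{J}^{n}\right\Vert \leq \sum_{n}\left\Vert \psi \right\Vert ^{n}\left\vert \gamma _{J}\right\vert ^{n}/n!=\exp \left( \left\Vert \psi \right\Vert \left\vert \gamma _{J}\right\vert \right) <\infty $.

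For boundedness I would use that the development never leaves the group. As $\psi $ takes values in the Lie algebra $su\left( d\right) $ of $SU\left( d\right) $, each increment $\psi \left( d\gamma _{t}\right) $ is trace-free and anti-Hermitian (equivalently $i$ times a traceless Hermitian matrix), so $d\left( \det \Psi _{t}\right) =0$ and, along the solution,
\[
d\left( \Psi _{t}\Psi _{t}^{\ast }\right) =\psi \left( d\gamma _{t}\right) \Psi _{t}\Psi _{t}^{\ast }+\Psi _{t}\Psi _{t}^{\ast }\,\psi \left( d\gamma _{t}\right) ^{\ast },
\]
which vanishes when $\Psi _{t}\Psi _{t}^{\ast }=I$ because $\psi \left( d\gamma _{t}\right) ^{\ast }=-\psi \left( d\gamma _{t}\right) $. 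By uniqueness of solutions $\Psi _{t}\in SU\left( d\right) $ for all $t$, hence $\left\Vert \Psi _{t}\right\Vert =1$ in operator norm, \emph{uniformly} in $\gamma $ and independently of its length or roughness. Every entry is therefore bounded by $1$, and $\mathbb{E}_{\mu }\left( \Psi _{J_{+}}\right) $ exists entrywise for any probability measure $\mu $ on paths --- in contrast with the expected Signature, which behaves like a Laplace transform and may diverge. This is the sense in which the unitary development is a Fourier-type rather than a Laplace-type object.

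To justify the name \emph{extended characteristic function} I would repeat the Stone--Weierstrass argument used for Fawcett's proposition, but now with these bounded functionals. Let $K$ be the compact set of Signatures and let $\mathcal{A}\subset C\left( K\right) $ be the span of all entries $\left( \Psi _{J_{+}}\right) _{ij}$ as $\psi $ ranges over linear maps $E\rightarrow su\left( d\right) $ and $d$ over all dimensions. This family contains the constants (the trivial representation), is closed under complex conjugation (the development under $\bar{\psi}$ produces $\bar{\Psi}$), and is an algebra: a product of two entries is an entry of the tensor-product development $\Psi ^{\left( \psi _{1}\right) }\otimes \Psi ^{\left( \psi _{2}\right) }$, which by complete reducibility of the finite-dimensional unitary representations of $SU\left( d\right) $ decomposes into a finite direct sum of irreducible unitary developments whose entries again lie in $\mathcal{A}$. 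Provided these entries separate the points of $K$, Stone--Weierstrass makes $\mathcal{A}$ dense in $C\left( K\right) $, so the family $\left\{ \mathbb{E}_{\mu }\left( \Psi _{J_{+}}\right) \right\} _{\psi }$ determines $\mu \circ S^{-1}$ exactly as the expected Signature does in the compact case.

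The genuine obstacle is this last step; the analytic facts (convergence and the $\left\Vert \Psi _{t}\right\Vert =1$ bound) are routine. The substance is representation-theoretic and splits into two parts. The first is the closure of $\mathcal{A}$ under products, i.e. the decomposition of tensor products of $SU\left( d\right) $-representations into irreducibles --- precisely the requirement flagged in the discussion preceding the theorem that ``products \ldots can be expressed as sums of finite linear combinations of finite dimensional unitary representations''. The second, which I expect to be the hard input, is the separation of Signatures (group-like elements) by the finite-dimensional unitary developments: this is a faithfulness property of the family on $K$, and it is what upgrades ``the expectation exists'' to ``the expectation determines the law''.
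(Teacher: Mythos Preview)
The paper does not actually prove this theorem: it is a survey, and the result is stated with attribution to \cite{chevyrev2014unitary}. The surrounding text does, however, signal exactly the architecture you have reconstructed --- Picard expansion to exhibit $\Psi_{t}$ as $\psi_{\ast}(S(\gamma))$, the factorial bound for convergence, unitarity of the development for the uniform bound, and then a Stone--Weierstrass mechanism resting on (i) closure under products via decomposition of tensor products of unitary representations and (ii) separation of Signatures by the family of unitary developments. The paper isolates (ii) as the subsequent Proposition (the polynomial identities of Giambruno and Valenti), confirming your diagnosis that separation is the substantive input; your sketch is a correct and faithful expansion of what the paper only gestures at.

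One point of emphasis where you drift from the paper's intent: you run the Stone--Weierstrass step on a \emph{compact} set $K$ of Signatures, which essentially reproduces Fawcett's argument and undercuts the reason for introducing bounded unitary functionals in the first place. The paper's one-line ``proof'' of the Corollary --- ``Introduce a polish topology on the grouplike elements'' --- indicates that the intended route equips the full (non-compact) space of grouplike elements with a Polish topology and argues determinacy there, using boundedness of $\Psi$ to handle arbitrary laws rather than just compactly supported ones. For the theorem exactly as stated (linearity, convergence, boundedness, existence of the expectation) your argument is complete; to reach the Corollary you would need to replace the compact-$K$ step with this Polish-topology upgrade.
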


\begin{proposition}
$\psi \rightarrow \Psi \left( S\right) $ (polynomial identities of Gambruni
and Valentini) span an algebra and separate Signatures as $\psi $ and $d$
vary.
\end{proposition}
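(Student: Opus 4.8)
The plan is to realise each unitary development as the image of the signature under an algebra homomorphism, and then to verify the two asserted properties — closure under multiplication and separation — at the level of these homomorphisms. First I would record the basic reinterpretation already implicit in the preceding theorem: a linear map $\psi:E\to su(d)$ into traceless Hermitian matrices extends uniquely to a unital algebra homomorphism $\hat\psi:T((E))\to M(d)$ by $\hat\psi(e_{i_1}\otimes\cdots\otimes e_{i_k})=\psi(e_{i_1})\cdots\psi(e_{i_k})$, and the development solving $d\Psi_t=\psi(\Psi_t)d\gamma_t$ is exactly $\Psi_{J_+}=\hat\psi(S(\gamma))$. Since $\psi$ lands in the Lie algebra $su(d)$ and $S(\gamma)$ is grouplike, $\hat\psi$ carries signatures into $SU(d)$; in particular $\operatorname{tr}\psi(e)=0$ forces $\det\Psi_{J_+}=1$. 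The scalar functions in question are then the matrix entries $S\mapsto(\hat\psi(S))_{ij}$, and the proposition asks that, as $\psi,d,i,j$ vary, their linear span be an algebra separating signatures.

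For the algebra structure I would use tensor products of developments. Given $(\psi,d)$ and $(\phi,d')$, the generator $e\mapsto\psi(e)\otimes I_{d'}+I_d\otimes\phi(e)$ again takes traceless Hermitian values, hence lies in $su(dd')$, and the development it drives is precisely $\Psi\otimes\Phi$. Consequently each product $(\hat\psi(S))_{ij}(\hat\phi(S))_{kl}$ is a single matrix entry of the development driven by this combined generator, so the span is closed under multiplication; the trivial representation ($d=1$, $\psi=0$) supplies the constants, and passing from $\psi$ to its entrywise complex conjugate (still traceless Hermitian) supplies closure under conjugation. This delivers a unital $*$-algebra with essentially no further work.

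The separation is the substantive part. Suppose $S_1\neq S_2$ and set $g:=S_1\otimes S_2^{-1}$, a grouplike element with $g\neq 1$; it suffices to produce $\psi,d$ with $\hat\psi(g)\neq I$. Let $g_n\in E^{\otimes n}$ be the lowest-degree nonvanishing homogeneous component of $g$. Replacing $\psi$ by $t\psi$ scales the degree-$k$ part of $\hat\psi(g)$ by $t^k$, so $\widehat{t\psi}(g)=I+t^n\hat\psi(g_n)+O(t^{n+1})$, and the $n$-th derivative in $t$ at $0$ recovers $\hat\psi(g_n)$. Thus separation of signatures reduces to the purely algebraic claim that the homomorphisms $\hat\psi$, restricted to $E^{\otimes n}$ and ranging over all $\psi:E\to su(d)$ and all $d$, have trivial common kernel, i.e. detect every nonzero homogeneous tensor.

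I would establish this last claim by polynomial-identity theory. Writing $g_n$ in a fixed basis as $\sum c_{i_1\cdots i_n}\,e_{i_1}\otimes\cdots\otimes e_{i_n}$, the task is to find traceless Hermitian $A_1,\dots,A_m$ with $\sum c_{i_1\cdots i_n}A_{i_1}\cdots A_{i_n}\neq 0$. By Amitsur--Levitzki the matrix algebra $M(d)$ satisfies no polynomial identity of degree $n$ as soon as $2d>n$, so words of length $n$ in generic matrices span a space rich enough that no nonzero coefficient array $c$ can annihilate all of them; letting $d\to\infty$ handles every degree $n$ at once, which is precisely why the hypothesis is that both $\psi$ \emph{and} $d$ vary. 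The one point needing genuine care is that the generators be taken traceless and Hermitian rather than arbitrary complex matrices, which I would resolve by passing to the complexification $sl(d,\mathbb{C})$ of $su(d)$ and checking that the real span of words in Hermitian matrices still separates $E^{\otimes n}$. This reconciliation of the compact real form $su(d)$ with the representation-theoretic input is where I expect the main technical effort to lie; the algebra and scaling steps are essentially formal.
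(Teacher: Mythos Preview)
The paper does not actually supply a proof of this proposition: it is stated with the parenthetical hint ``(polynomial identities of Gambruni and Valentini)'' and then the reader is referred to \cite{chevyrev2014unitary} for the details. So there is no in-paper argument to compare against line by line; the only indication of method is that reference to polynomial-identity theory.

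Your sketch is essentially the argument that the cited reference carries out, and it lines up with the paper's hint. The tensor-product construction $e\mapsto\psi(e)\otimes I_{d'}+I_d\otimes\phi(e)$ is the standard way to see that the matrix coefficients of unitary developments are closed under multiplication, and the trivial and conjugate representations give constants and complex conjugates; this part is straightforward. Your reduction of separation to detecting a nonzero homogeneous tensor via the scaling $\psi\mapsto t\psi$ is also correct (the factorial decay of signature components makes the series analytic in $t$, so the $n$-th derivative isolates $\hat\psi(g_n)$). The final step, showing that no nonzero homogeneous tensor is killed by every $\hat\psi$ with $\psi:E\to su(d)$ as $d$ grows, is exactly where the Giambruno--Valenti input enters: one needs not merely Amitsur--Levitzki for $M(d)$ but the sharper statement that there is no low-degree polynomial identity when the variables are constrained to be (traceless) Hermitian, i.e.\ a result about $*$-polynomial identities in symmetric variables. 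You have correctly located this as the substantive point; your complexification idea (a real polynomial vanishing on the real form $su(d)$ vanishes on $sl(d,\mathbb{C})$) is the right first move, and the remaining passage from $sl(d,\mathbb{C})$ to $M(d)$ is what the cited PI results handle.

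In short: your plan is sound, matches the route signposted by the paper, and your self-identified ``main technical effort'' is precisely the content of the Giambruno--Valenti reference.
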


\begin{corollary}
The laws of measures on Signatures are completely determined by $\psi
\rightarrow \mathbb{E}\left( \Psi \left( S\right) \right) $
\end{corollary}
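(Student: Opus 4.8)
The plan is to mirror the proof of Fawcett's uniqueness proposition, but to replace its Stone--Weierstrass argument on a \emph{compact} set of signatures --- the step that forced compact support there --- by one that exploits the \emph{boundedness} guaranteed by the preceding Theorem, so that no hypothesis on the support of the measure is required. Let $\mathcal{G}$ denote the space of grouplike elements (the range of the Signature map), and suppose $\mu,\nu$ are two probability measures on $\mathcal{G}$ with $\mathbb{E}_{\mu}\bigl(\Psi(S)\bigr)=\mathbb{E}_{\nu}\bigl(\Psi(S)\bigr)$ for every development $\psi\colon E\to su(d)$ and every $d$; the goal is to deduce $\mu=\nu$.

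First I would assemble the structural properties of the family
\[
\mathcal{A}:=\bigl\{\,S\mapsto \Psi(S)_{ij}\;:\;\psi,\,d,\,i,\,j\,\bigr\}
\]
of matrix-entry functions on $\mathcal{G}$. By the Theorem each member of $\mathcal{A}$ is bounded, and since it is given by a convergent series it is continuous in the rough-path metric (cf.\ the earlier joint-continuity remark); thus $\mathcal{A}\subset C_b(\mathcal{G})$ and $\int a\,d\mu,\ \int a\,d\nu$ are finite for \emph{every} probability measure --- exactly the advantage over the expected-Signature (Laplace) functional, which can diverge through tail behaviour. By the Proposition $\mathcal{A}$ is closed under multiplication: the tensor product of two unitary developments decomposes, via the Gambruni--Valentini identities, into a finite direct sum of unitary developments, so a product of matrix entries is again a finite linear combination of matrix entries. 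The Proposition also gives that $\mathcal{A}$ separates the points of $\mathcal{G}$. It remains to note that $\mathcal{A}$ contains the constants (the one-dimensional trivial development returns $1$) and is closed under complex conjugation (the conjugate of a unitary development is the development into the dual representation, again unitary). Hence $\mathcal{A}$ is a conjugation-closed unital subalgebra of $C_b(\mathcal{G})$.

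The substantive step is to pass from ``$\mathcal{A}$ is a point-separating $*$-subalgebra of $C_b(\mathcal{G})$'' to ``$\mathcal{A}$ determines measures'' without compactness. I would do this through the compactification dual to $\mathcal{A}$: its uniform closure $\overline{\mathcal{A}}\subset C_b(\mathcal{G})$ is a commutative unital $C^*$-algebra, so by Gelfand duality $\overline{\mathcal{A}}\cong C(K)$ for a compact Hausdorff spectrum $K$, into which $\mathcal{G}$ maps continuously with dense range, injectively because $\mathcal{A}$ separates points. Pushing $\mu,\nu$ forward to Radon measures $\mu_*,\nu_*$ on $K$, the hypothesis yields $\int_K a\,d\mu_*=\int_K a\,d\nu_*$ for every $a$ in the dense subalgebra $\mathcal{A}$ of $C(K)$; density forces the two functionals to agree on all of $C(K)$, so $\mu_*=\nu_*$ by the Riesz representation theorem, and finally $\mu=\nu$ by injectivity of $\mathcal{G}\hookrightarrow K$.

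The main obstacle is precisely this last passage: one must ensure that the push-forwards are genuine Radon measures on $K$ and that no spurious mass accumulates on the boundary $K\setminus\mathcal{G}$ in a way that would let two distinct laws on $\mathcal{G}$ share every functional $\psi\mapsto\mathbb{E}(\Psi(S))$. Concretely this asks that the embedding $\mathcal{G}\hookrightarrow K$ be Borel onto its image and that the measures be tight --- conditions that hold once $\mathcal{G}$ is equipped with a rough-path metric making it Polish and the measures are taken to be Radon. Verifying these regularity hypotheses, rather than the algebraic separation itself, is where the genuine work lies, and it is the analogue here of the Fourier-inversion estimates underlying the classical characteristic-function uniqueness theorem.
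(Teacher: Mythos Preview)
Your proposal is correct and lands on exactly the key point the paper's own proof consists of: the single sentence ``Introduce a polish topology on the grouplike elements.'' You have supplied the argument the paper leaves implicit---boundedness from unitarity, the algebra and separation properties from the preceding Proposition, and then the measure-theoretic passage---and correctly isolated that the only genuine work is the Polish structure on $\mathcal{G}$ needed for tightness and for the embedding into your Gelfand spectrum $K$ to be a Borel isomorphism onto its image.
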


\begin{proof}
Introduce a polish topology on the grouplike elements.
\end{proof}

These results can be found in \cite{chevyrev2014unitary}, the paper also
gives a sufficient condition for the expected Signature to determine the law
of the underlying measure on Signatures.

\section{Moments are complicated}

The question of determining the Signature from its moments seems quite hard
at the moment.

\begin{example}
Observe that if $X$ is $N\left( 0,1\right) $ then although $X^{3}$ is not
determined by its moments, if $Y=X^{3}$ then $\left( X,Y\right) $ is. The
moment information implies $\mathbb{E}\left( \left( Y-X^{3}\right)
^{2}\right) =0.$
\end{example}

We repeat our previous question. Does the expected Signature determine the
law of the Signature for say stopped Brownian motion. The problem seems to
capture the challenge.

\begin{lemma}[\protect\cite{chevyrev2014unitary}]
If the radius of convergence of $\sum z^{n}\mathbb{E}\left\Vert
S^{n}\right\Vert $ is infinite then the expected Signature determines the
law.
\end{lemma}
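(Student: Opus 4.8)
The plan is to deduce this from the Corollary of the previous section, which already tells us that the law of a measure on Signatures is determined by the extended characteristic function $\psi \mapsto \mathbb{E}(\Psi(S))$ coming from the unitary developments. It therefore suffices to show that, under the stated hypothesis, the single object $\hat{S} = \mathbb{E}(S)$ determines $\mathbb{E}(\Psi_{J_+}(S))$ for every $\psi : E \to su(d)$ and every $d$. Once this is in place the Corollary closes the argument.

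To see how $\hat{S}$ enters, recall that the development $d\Psi_t = \psi(\Psi_t)\,d\gamma_t$ is a linear controlled differential equation, so by the expansion (\ref{chensig2}) its terminal value is a series that is linear in the Signature,
\[
\Psi_{J_+}(S) = \sum_{n=0}^{\infty}\psi^{\otimes n}\!\left(S^n\right),
\]
where each term is a continuous linear image of the homogeneous component $S^n \in E^{\otimes n}$ (insert the $n$ tensor legs of $S^n$ into $n$ copies of $\psi$ and multiply the resulting matrices). Because $\psi$ takes values in $su(d)$ we have $\|\psi^{\otimes n}(S^n)\| \le \|\psi\|^n \|S^n\|$, and the hypothesis that $\sum_n z^n \mathbb{E}\|S^n\|$ has infinite radius of convergence yields $\sum_n \|\psi\|^n \mathbb{E}\|S^n\| < \infty$ for every $\psi$. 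This summability is exactly what licences exchanging expectation with the sum, so that
\[
\mathbb{E}\!\left(\Psi_{J_+}(S)\right) = \sum_{n=0}^{\infty}\psi^{\otimes n}\!\left(\mathbb{E}(S^n)\right) = \sum_{n=0}^{\infty}\psi^{\otimes n}\!\left(\hat{S}^n\right),
\]
which manifestly depends only on $\hat{S}$. Hence $\hat{S}$ determines the extended characteristic function, and the Corollary then determines the law.

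I expect the main obstacle to be the justification of this interchange of expectation and summation, and this is precisely the step that forces the radius-of-convergence hypothesis rather than mere boundedness of $\Psi$. The earlier Theorem guarantees that $\Psi_{J_+}(S)$ is a bounded (unitary) matrix, so $\mathbb{E}(\Psi_{J_+}(S))$ exists unconditionally; but boundedness alone does not allow one to recover this expectation term by term from the components of $\hat{S}$, since the partial sums of the development need not be dominated uniformly in $\gamma$. The infinite radius of convergence supplies an integrable dominating series $\sum_n \|\psi\|^n \|S^n\|$, and this yields both the absolute convergence of the reconstructing series and the validity of the dominated-convergence/Fubini step. A secondary point to verify is that the bound $\sum_n \|\psi\|^n \mathbb{E}\|S^n\| < \infty$ holds for arbitrary $\|\psi\|$ and every dimension $d$, so that the \emph{entire} extended characteristic function, and not merely finitely many of its values, is pinned down by $\hat{S}$; this is immediate from the assumption once $z=\|\psi\|$ is allowed to be arbitrary.
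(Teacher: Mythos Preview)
The paper does not actually prove this lemma; it is stated with a citation to \cite{chevyrev2014unitary} and no proof environment follows. What the paper does do is set up, in the preceding section, exactly the machinery your argument invokes: the unitary developments $\Psi$, the fact that they are bounded linear functionals of $S$ given by a convergent series, and the Corollary that the extended characteristic function $\psi\mapsto\mathbb{E}(\Psi(S))$ determines the law. The paper then remarks that the cited reference ``also gives a sufficient condition for the expected Signature to determine the law'', and the present lemma is that condition.

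Your proof is correct and is precisely the argument one would extract from that setup: write $\Psi_{J_+}(S)=\sum_n\psi^{\otimes n}(S^n)$ via (\ref{chensig2}), use the hypothesis to obtain $\sum_n\|\psi\|^n\,\mathbb{E}\|S^n\|<\infty$ for every $\psi$, exchange expectation and summation by dominated convergence, and conclude that $\mathbb{E}(\Psi_{J_+}(S))$ is a function of $\hat S$ alone, whence the Corollary finishes. Your identification of the key point---that boundedness of $\Psi$ alone does not license the termwise interchange, and that this is exactly where the infinite radius of convergence is spent---is also on the mark. There is nothing to compare against in the paper itself, but your argument matches the intended route through the preceding Corollary.
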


\begin{lemma}[\protect\cite{ExpectedSignatureBM}]
If $X$ a Brownian motion with L\'{e}vy area on a bounded $C^{1}$ domain $%
\Omega \subset \mathbb{R}^{d}$ then $\sum z^{n}\mathbb{E}\left\Vert
S^{n}\right\Vert $ has at the least a strictly positive lower bound on the
radius of curvature.
\end{lemma}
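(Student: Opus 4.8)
The plan is to prove the radius of convergence of $\sum z^{n}\mathbb{E}\Vert S^{n}\Vert$ is positive by showing $\mathbb{E}\Vert S^{n}\Vert$ grows at most geometrically. The difficulty is that this is the expectation of a \emph{norm}, which the expected-signature PDE of the preceding theorem does not see directly; that PDE controls only $F=(f_{0},f_{1},\dots)$ with $f_{m}=\mathbb{E}_{z}(S^{m})$, i.e. the norm of the expectation. The key idea is to use the shuffle lemma to reduce $\mathbb{E}\Vert S^{n}\Vert$ to the size of $F$ at twice the degree, and then to extract geometric growth of $\Vert f_{m}\Vert$ from the PDE recurrence by elliptic regularity.

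First I would fix on each $E^{\otimes m}$ the Euclidean (Hilbert--Schmidt) norm attached to the orthonormal word basis $\{w:|w|=m\}$, so that $\Vert S^{n}\Vert^{2}=\sum_{|w|=n}\langle w,S(\gamma)\rangle^{2}$. Since $X$ is a geometric rough path its iterated integrals are real valued, so the shuffle lemma gives $\langle w,S(\gamma)\rangle^{2}=\langle w\amalg w,S(\gamma)\rangle$ pathwise, whence
\[
\Vert S^{n}\Vert^{2}=\sum_{|w|=n}\langle w\amalg w,S(\gamma)\rangle=\langle \Lambda_{n},S(\gamma)\rangle,\qquad \Lambda_{n}:=\sum_{|w|=n}w\amalg w,
\]
a fixed linear functional supported on words of length $2n$. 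Taking expectations and using linearity,
\[
\mathbb{E}\Vert S^{n}\Vert^{2}=\langle \Lambda_{n},\mathbb{E}(S)\rangle=\langle \Lambda_{n},f_{2n}\rangle .
\]
This is the crucial reduction: the expected norm at level $n$ equals a single explicit functional applied to the expected signature at level $2n$. The coefficient mass of $\Lambda_{n}$ is at most $d^{n}\binom{2n}{n}\le (4d)^{n}$ (there are $d^{n}$ words $w$, and each $w\amalg w$ is a sum of $\binom{2n}{n}$ words), so by $\ell^{1}$--$\ell^{\infty}$ duality on word coefficients $\langle\Lambda_{n},f_{2n}\rangle\le (4d)^{n}\Vert f_{2n}\Vert_{\infty}$, and by Jensen $\mathbb{E}\Vert S^{n}\Vert\le (\mathbb{E}\Vert S^{n}\Vert^{2})^{1/2}$.

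It therefore suffices to show the expected signature itself grows geometrically, $\Vert f_{m}\Vert_{\infty}\le C\rho^{m}$, and this is where I would use the recurrence of the theorem. Each $f_{n+2}$ solves the Poisson problem $\Delta f_{n+2}=g_{n+2}$ on $\Omega$ with $f_{n+2}|_{\partial\Omega}=0$, where $g_{n+2}=-\sum_{i}e_{i}\otimes e_{i}\otimes f_{n}-2\sum_{i}e_{i}\otimes\partial_{i}f_{n+1}$ satisfies $\Vert g_{n+2}\Vert\le C(\Vert f_{n}\Vert+\Vert\nabla f_{n+1}\Vert)$, the tensor factors contributing only the dimensional constant since the $e_{i}$ have unit norm. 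Applying the $W^{2,p}$ elliptic estimate with zero boundary data yields $\Vert f_{n+2}\Vert_{W^{2,p}}\le C\Vert g_{n+2}\Vert_{L^{p}}\le C(\Vert f_{n}\Vert_{W^{2,p}}+\Vert f_{n+1}\Vert_{W^{2,p}})$, where $W^{2,p}\hookrightarrow W^{1,p}$ absorbs the gradient term. Writing $b_{m}=\Vert f_{m}\Vert_{W^{2,p}}$ this is the scalar linear recurrence $b_{m+2}\le C(b_{m}+b_{m+1})$, whose solutions grow like $\rho^{m}$ with $\rho$ the larger root of $\rho^{2}=C\rho+C$; a final Sobolev embedding $W^{2,p}\hookrightarrow C^{0}$ (with $p>d/2$) delivers $\Vert f_{m}\Vert_{\infty}\le C\rho^{m}$. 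Combining, $\mathbb{E}\Vert S^{n}\Vert\le\sqrt{C}\,(2\sqrt{d}\,\rho)^{n}$, so the radius of convergence is at least $1/(2\sqrt{d}\,\rho)>0$.

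The main obstacle is exactly this last step, and within it the demand that the elliptic estimate be uniform in $n$ and that it propagate \emph{gradient} control, since the source $g_{n+2}$ contains $\partial_{i}f_{n+1}$; closing the recurrence forces one to work in a norm that simultaneously dominates $f$ and $\nabla f$ and is reproduced by inverting the Laplacian, hence $W^{2,p}$ rather than a plain sup bound. The delicate point is boundary regularity: full up-to-the-boundary $W^{2,p}$ (or Schauder) estimates on a domain that is only $C^{1}$ are borderline, so one must either exploit the zero boundary data together with the divergence form of the source through a weak $W^{1,p}$ formulation, or strengthen the boundary hypothesis marginally. This is precisely where the Sobolev and regularity estimates invoked after the theorem carry the argument.
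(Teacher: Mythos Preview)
The survey does not itself prove this lemma; it is simply quoted from \cite{ExpectedSignatureBM}. Your outline is essentially the argument of that reference: use the shuffle identity to convert $\mathbb{E}\lVert S^{n}\rVert^{2}$ into a fixed linear functional of $f_{2n}=\mathbb{E}_{z}(S^{2n})$, and then extract geometric growth $\lVert f_{m}\rVert\le C\rho^{m}$ from the PDE hierarchy by elliptic regularity in a norm that simultaneously controls $f$ and $\nabla f$, so that the gradient source $2\sum_{i}e_{i}\otimes\partial_{i}f_{n+1}$ is absorbed and the scalar recurrence $b_{m+2}\le C(b_{m}+b_{m+1})$ closes. The shuffle reduction you single out is exactly the non-obvious bridge between the expectation of the norm and the norm of the expectation, and you are right that without it the PDE for $F$ says nothing directly about $\mathbb{E}\lVert S^{n}\rVert$.

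The caveat you raise about $C^{1}$ boundaries is well placed and is not a defect in your argument but in the survey's paraphrase: up-to-the-boundary $W^{2,p}$ (or Schauder) estimates are not in general available on a merely $C^{1}$ domain, and the cited paper in fact works under a stronger smoothness hypothesis on $\partial\Omega$ so that these estimates hold. With that regularity assumed, your recurrence closes exactly as you describe.
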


The gap in understanding between the previous two results is, for the
author, a fascinating and surprising one that should be closed!

\section{Regression onto a feature set}

Learning how to regress or learn a function from examples is a basic problem
in many different contexts. In what remains of this paper, we will outline
recent work that explains how the Signature engages very naturally with this
problem and why it is this engagement that makes it valuable in rough path
theory too.

We should emphasise that the discussion and examples we give here is at a
very primitive level of fitting curves. We are not trying to do statistics,
or model and make inference about uncertainty. Rather we are trying to solve
the most basic problems about extracting relationships from data that would
exist even if one had perfect knowledge. We will demonstrate that this
approach can be easy to implement and effective in reducing dimension and
doing effective regression. We would expect Baysian statistics to be an
added layer added to the process where uncertanty exists in the data that
can be modelled reasonably.

A core idea in many successful attempts to learn functions from a collection
of known (point, value) pairs revolves around the identification of basic
functions or features that are readily evaluated at each point and then try
to express the observed function a\emph{\ linear} combination of these basic
functions. For example one might evaluate a smooth function $\rho $ at a
generic collection $\left\{ x_{i}\in \left[ 0,1\right] \right\} $ of points
producing pairs $\left\{ \left( y_{i}=\rho \left( x_{i}\right) ,x_{i}\right)
\right\} $ Now consider as feature functions $\left\{ \phi _{n}:x\rightarrow
x^{n},n=0,\ldots N\right\} $. These are certainly easy to compute for each $%
x_{i}$. We try to express 
\[
\rho \simeq \sum_{n=0}^{N}\lambda _{n}\phi _{n}
\]%
and we see that if we can do this (that is to say $\rho $ is well
approximated by a polynomial) then the $\lambda _{n}$ are given by the
linear equation%
\[
y_{j}=\sum_{n=0}^{N}\lambda _{n}\phi _{n}\left( x_{j}\right) .
\]%
In general one should expect, and it is even desirable, that the equations
are significantly degenerate. The purpose of learning is presumably to be
able to use the function $\sum_{n=0}^{N}\lambda _{n}\phi _{n}$ to predict $%
\rho $ on new and unseen values of $x$ and to at least be able to replicate
the observed values of $y$.

There are powerful numerical techniques for identifying robust solutions to
these equations. Most are based around least squares and singular value
decomposition, along with ~$L^{1}$ constraints and Lasso.

However, this approach fundamentally depends on the assumption that the $%
\phi _{n}$ span the class of functions that are interesting. It works well
for monomials because they span an algebra and so every $C^{n}\left(
K\right) $ function can be approximated in $C^{n}\left( K\right) $ by a
multivariate real polynomial. It relies on a priori knowledge of smoothness
or Lasso style techniques to address over-fitting.

I hope the reader can now see the significance of the coordinate iterated
integrals. If we are interested in functions (such as controlled
differential equations) that are effects of paths or streams, then we know
from the general theory of rough paths that the functions are indeed well
approximated locally by linear combinations of coordinate iterated integrals
. Coordinate iterated integrals are a natural feature set for capturing the
aspects of the data that predicting the effects of the path on a controlled
system.

The shuffle product ensures that linear combinations of coordinate iterated
integrals are an algebra which ensures they span adequately rich classes of
functions. We can use the classical techniques of non-linear interpolation
with these new feature functions to learn and model the behaviour of systems.

In many ways the machine learning perspective explains the whole theory of
rough paths. If I\ want to model the effect of a path segment, I can do a
good job by studying a few set features of my path locally. On smaller
scales the approximations improve since the functionals the path interacts
with become smoother. If the approximation error is small compared with the
volume, and consistent on different scales, then knowing these features, and
only these features, on all scales describes the path or function adequately
enough to allow a limit and integration of the path or function against a
Lipchitz function.

\section{The obvious feature set for streams}

The feature set that is the coordinate iterated integrals is able (with
uniform error - even in infinite dimension) via linear combinations whose
coefficients are derivatives of $f$, to approximate solutions to controlled
differential equations \cite{boutaib2013dimension}. In other words, any
stream of finite length is characterised up to reparameterisation by its log
Signature (see \cite{hambly2010uniqueness}) and the Poincare-Birkhoff-Witt
theorem confirms that the coordinate iterated integrals are one way to
parameterise the polynomials on this space. Many important nonlinear
functions on paths are well approximated by these polynomials...

We have a well defined methodology for linearisation of smooth functions on
unparameterised streams as linear functionals of the Signature. As we will
explain in the remaining sections, this has potential for practical
application even if it comes from the local embedding of a group into its
enveloping algebra and identifying the dual with the real polynomials and
analytic functions on the group.

\section{Machine learning, an amateur's first attempt}

Applications do not usually have a simple fix but require several methods in
parallel to achieve significance. The best results to date for the use of
Signatures have involved the recognition of Chinese characters \cite%
{yin2013icdar} where Ben Graham put together a set of features based loosely
on Signatures and state of the art deep learning techniques to win a
worldwide competition organised by the Chinese Academy of Sciences.

We will adopt a different perspective and simply explain a very transparent
and naive approach, based on Signatures, can achieve with real data. The
work appeared in \cite{gyurko2013extracting}. The project and the data
depended on collaboration with commercial partners acknowledged in the paper
and is borrowed from the paper.

\subsection{classification of time-buckets from standardised data}

We considered a simple classification learning problem. We considered a
moderate data set of 30 minutes intervals of normalised one minute financial
market data, which we will call buckets. The buckets are distinguished by
the time of day that the trading is recorded. The buckets are divided into
two sets - a learning and a backtesting set. The challenge is simple: learn
to distinguish the time of day by looking at the normalised data (if indeed
one can - the normalisation is intended to remove the obvious). It is a
simple classification problem that can be regarded as learning a function
with only two values 
\[
\begin{array}{ccc}
f\left( \text{time series}\right) & \rightarrow & \text{time slot} \\ 
f\left( \text{time series}\right) =1 &  & \text{time slot=10.30-11.00} \\ 
f\left( \text{time series}\right) =0 &  & \text{time slot=14.00-14.30}%
\end{array}%
.
\]

Our methodology has been spelt out. Use the low degree coordinates of the
Signature of the normalised financial market data $\gamma $ as features $%
\phi _{i}\left( \gamma \right) $, use least squares on the learning set to
approximately reproduce $f$%
\[
f\left( \gamma \right) \approx \sum_{i}\lambda _{i}\phi _{i}\left( \gamma
\right)
\]%
and then test it on the backtesting set. To summarise the methodology:

\begin{enumerate}
\item We used futures data normalised to remove volume and volatility
information.

\item We used linear regression based pair-wise separation to find the best
fit linear function to the learning pairs that assign 0 to one case and 1 to
the other. (There are other well known methods that might be better.)

\begin{enumerate}
\item We used robust and automated repeated sampling methods of LASSO type (least absolute
shrinkage and selection operator) based on constrained $L^1$ optimisation to
achieve shrinkage of the linear functional onto an expression involving only
a few terms of the Signatures.
\end{enumerate}

\item and we used simple statistical indicators to indicate the
discrimination that the learnt function provided on the learning data and
then on the backtesting data. The tests were:

\begin{enumerate}
\item Kolmogorov-Smirnov distance of distributions of score values

\item receiver operating characteristic (ROC) curve, area under ROC curve

\item ratio of correct classification.
\end{enumerate}
\end{enumerate}

We did consider the full range of half hour time intervals. The other time
intervals were not readily distinguishable from each other but were easily
distinguishable from both of these two time intervals using the methodology
mapped out here. It seems likely that the differences identified here were
due to distinctive features of the market associated with the opening and
closing of the open outcry market.

\newpage


\begin{figure}[H]
\centering
\subfigure[\textbf{Learning
set}: Estimated densities of the regressed values, K-S distance: $0.8$, correct classification: $90\%$]{
\includegraphics[trim = 10mm 5mm 154mm 15mm, clip,width =
0.47\textwidth]{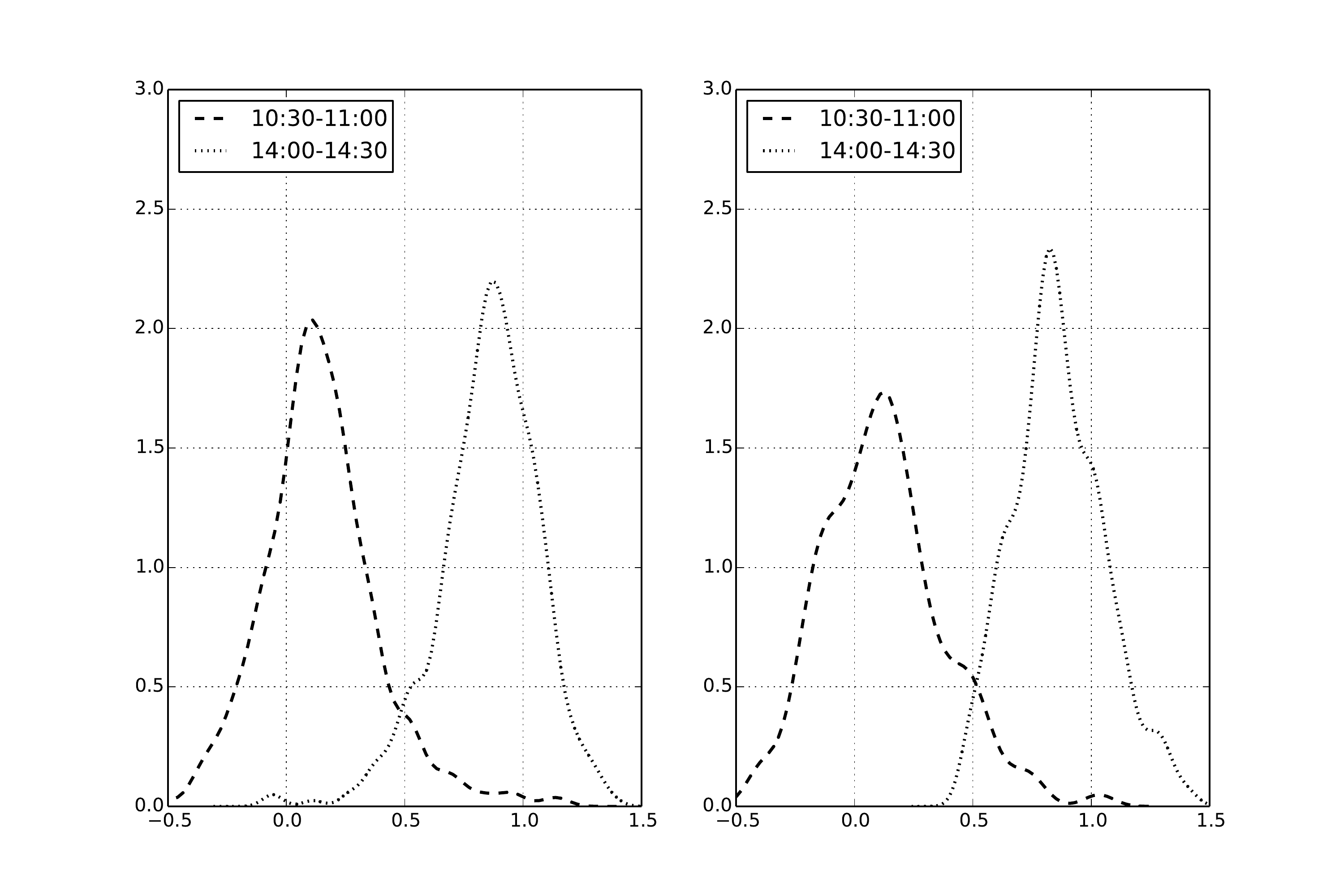}
} 
\subfigure[\textbf{Out of
sample}: Estimated densities of the regressed values, K-S distance:
$0.84$, correct classification: $89\%$]{
\includegraphics[trim = 154mm 5mm 10mm 15mm, clip,width =
0.47\textwidth]{CLN_NOV_plot_CDF_2011_2012_2013_1030vs1400_PDF}
} 
\subfigure[\textbf{ROC curve.} Area under ROC -- learning set: 0.976, out of
sample: 0.986 ]{
\includegraphics[trim = 10mm 5mm 10mm 10mm, clip,width =
0.8\textwidth]{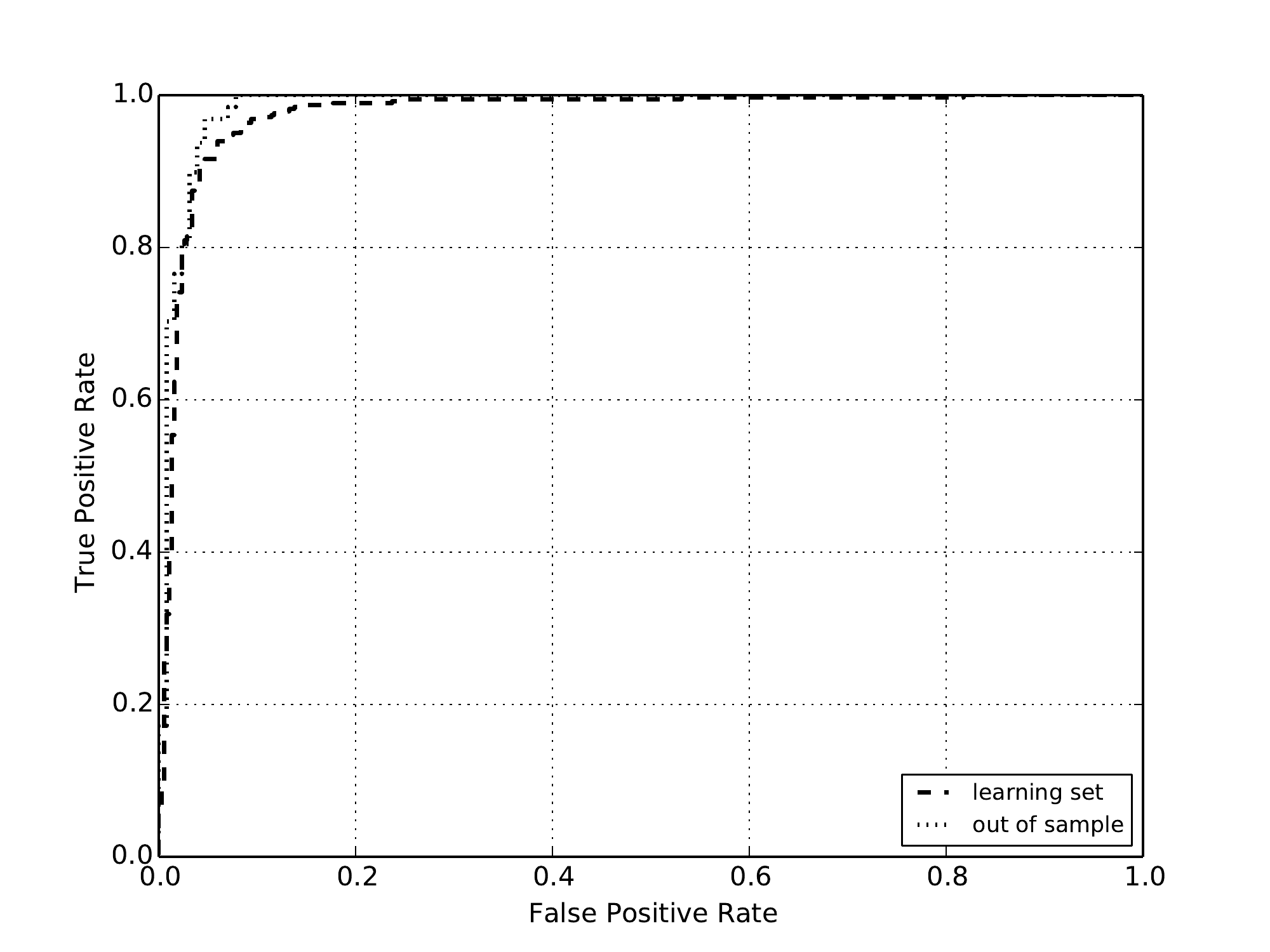}
}
\caption{14:00-14:30 EST versus 10:30-11:00 EST}
\label{fig:140vs1030}
\end{figure}

\begin{figure}[H]
\centering
\subfigure{
\includegraphics[trim = 5mm 5mm 5mm 5mm, clip,
width=0.45\textwidth]{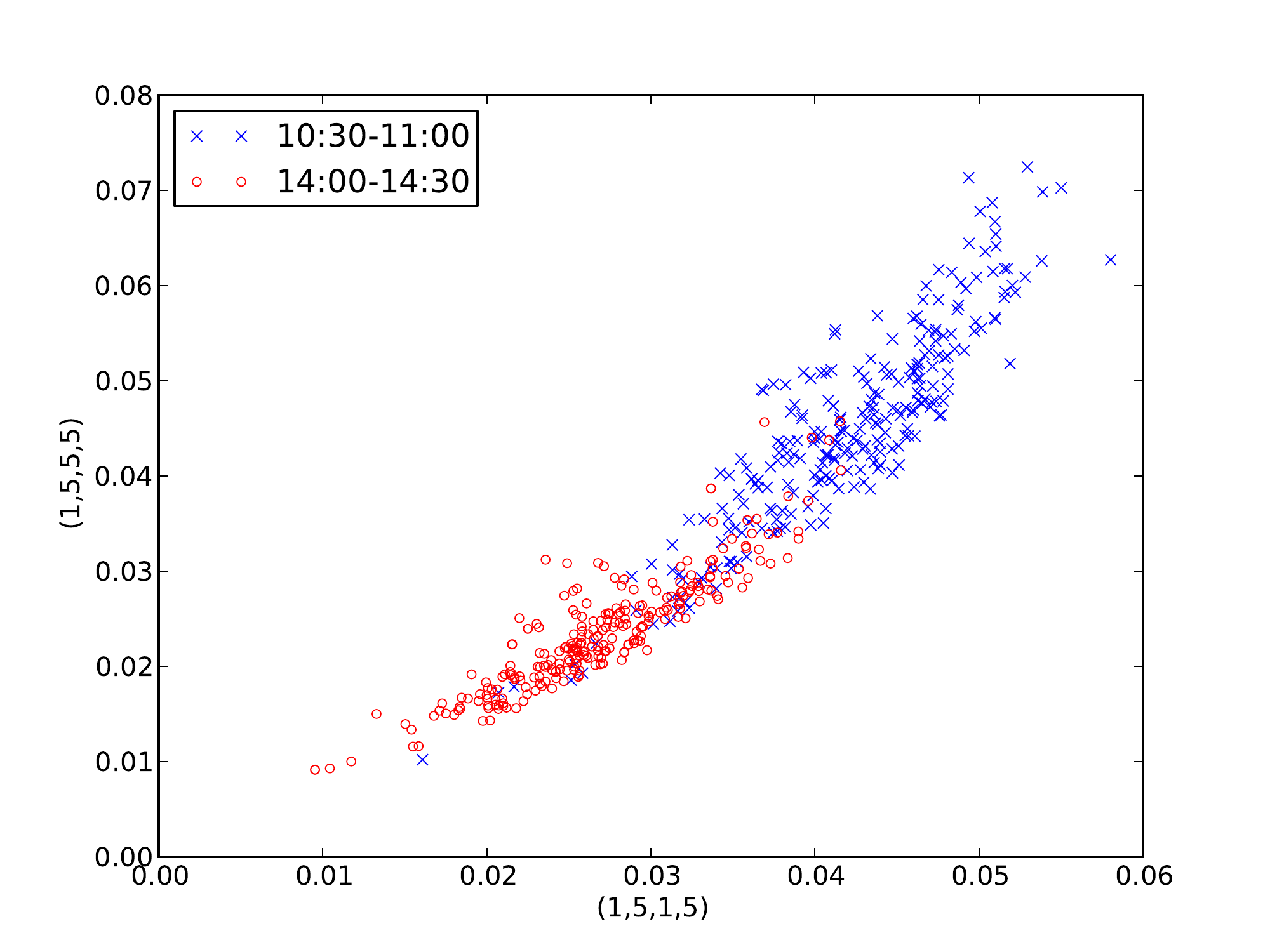}
} 
\subfigure{
\includegraphics[trim = 5mm 5mm 5mm 5mm, clip,
width=0.45\textwidth]{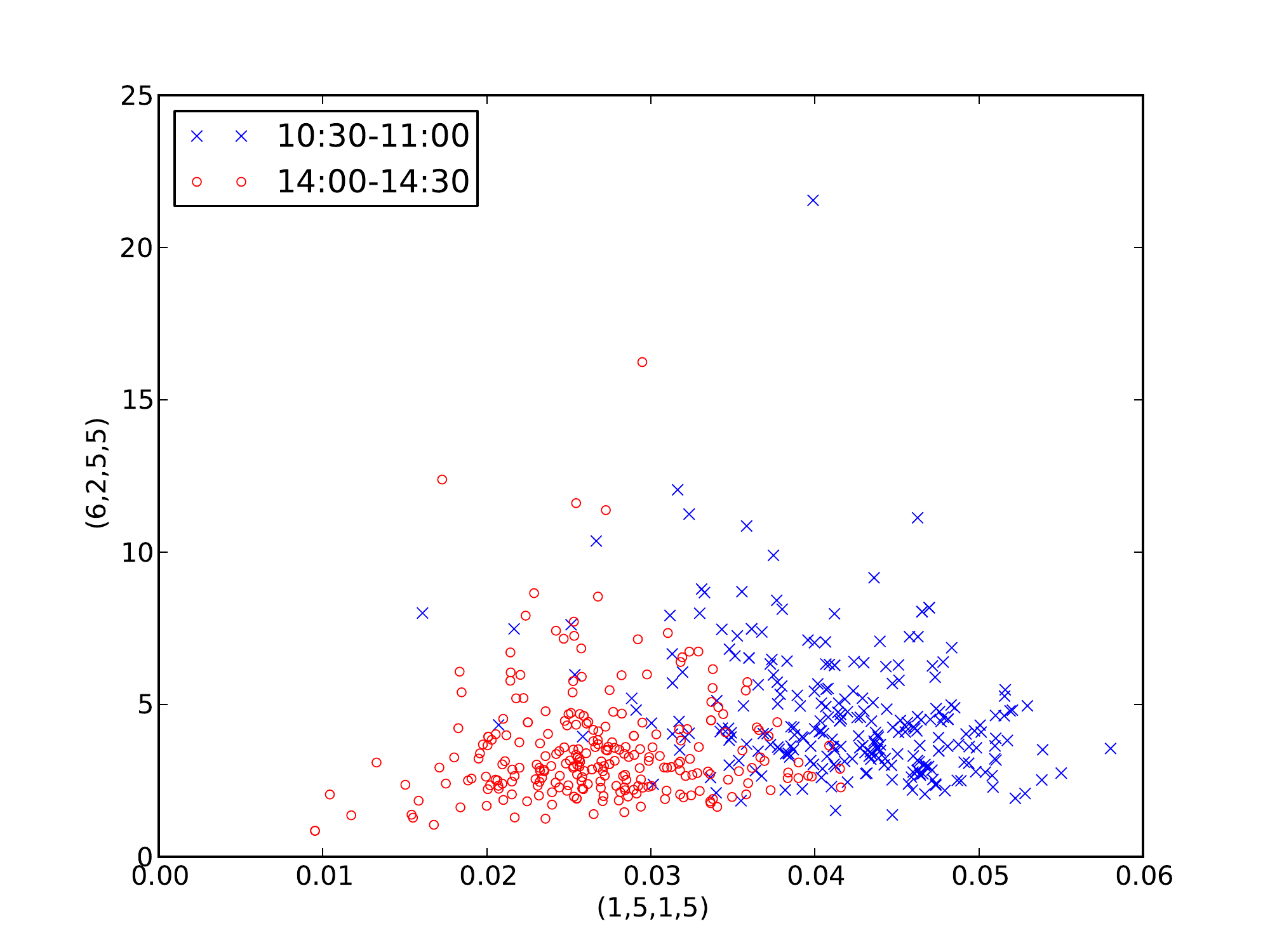}
} 
\subfigure{
\includegraphics[trim = 5mm 5mm 5mm 5mm, clip,
width=0.45\textwidth]{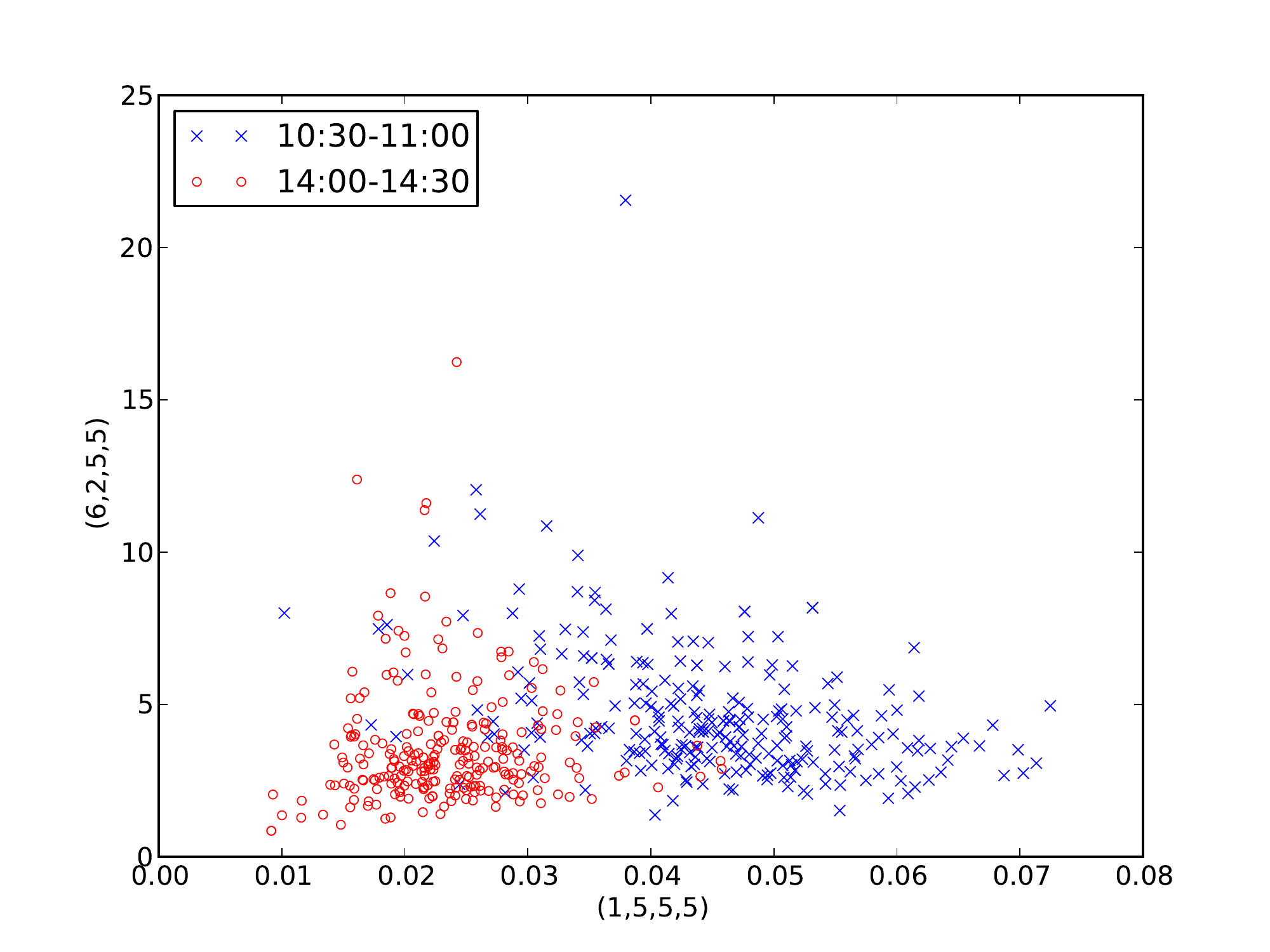}
} 
\subfigure{
\includegraphics[trim = 5mm 5mm 5mm 5mm, clip,
width=0.45\textwidth]{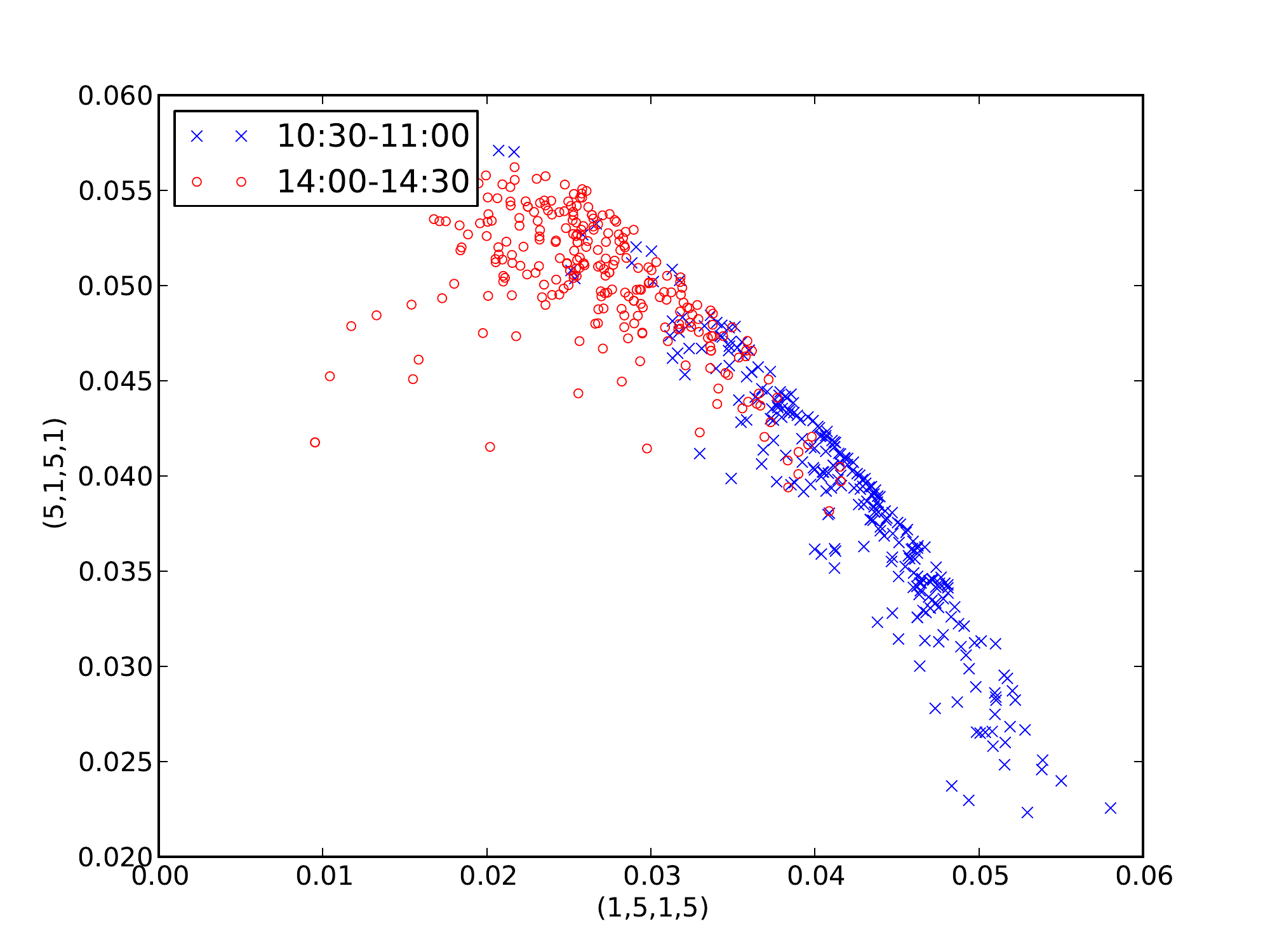}
}
\caption{Visualisation: two dimensional projections of the 4th order signature onto coefficients selected as significant by Lasso shrinkage. The selected features allow clear visual separation of the time buckets.}
\label{fig:visualisation}
\end{figure}

%
%
%

\section{Linear regression onto a law on paths}

In the previous section we looked at using the linearising nature of the
Signature as a pratical tool for learning functions. In this final section
we want to remain in the world of data and applications but make a more
theoretical remark. Classic nonlinear regression is usually stated with a
statistical element. One common formulation of linear regression has that a
stationary sequence of random data pairs that are modeled by 
\[
y_{i}=f\left( x_{i}\right) +\varepsilon _{i}
\]%
where $\varepsilon _{i}$ is random and has conditional mean zero. The goal is to
determine the linear functional $f$ with measurable confidence.

There are many situations where it is the case that one has a random but
stationary sequence $\left( \gamma ,\tau \right) $ of stream pairs, and one
would like to learn, approximately, the law of $\tau $ conditional on $%
\gamma $. Suppose that we reformulate this problem in terms of Signatures and
expected Signatures (or better: charateristic functions) recalling that expected Signatures  etc. characterise laws.

\begin{problem}
Given a random but stationary sequence $\left( \gamma ,\tau \right) $ of
stream pairs find the function $\Phi :S\left( \gamma \right) \rightarrow 
\mathbb{E}\left( S\left( \tau \right) |S\left( \gamma \right) \right) .$
\end{problem}

Then putting $Y_{i}=S\left( \tau _{i}\right) $ and $X_{i}=S\left( \gamma
_{i}\right) $ we see that%
\[
Y_{i}=\Phi \left( X_{i}\right) +\varepsilon _{i}
\]%
where $\varepsilon _{i}$ is random and has mean zero. If the measure is
reasonably localised and smooth then we can well approximate $\Phi $ by a
polynomial; and using th elinearising nature of the tensor algebra to a linear function $\phi $ of the Signature. 
In other
words the apparently difficult problem of understanding conditional laws of
paths becomes (at least locally) a problem of linear regression 
\[
Y_{i}=\Phi \left( X_{i}\right) +\varepsilon _{i}
\]%
whch is infinite dimensional but which has well defined low dimensional
approximations \cite{levin2013learning}.

\bibliographystyle{amsplain}
\bibliography{citations}

\end{document}